\newcommand\C{{\mathbb C}}
\newcommand\Q{{\mathbb Q}}
\newcommand\Z{{\mathbb Z}}
\newcommand\GL{{\mathrm {GL}}}
\newcommand\SL{{\mathrm {SL}}}
\newcommand\Gal{{\mathrm {Gal}}}
\newcommand\HH{{\mathcal H}}
\newcommand\OO{{\mathcal O}}
\newcommand\NN{{\mathcal{N}}}
\newcommand\ns{\mathrm{ns}}
\newcommand\ord{\mathrm{Ord}}
\newcommand\h{\mathrm{h}}
\newtheorem{theorem}{Theorem}[section]
\newtheorem{proposition}[theorem]{Proposition}
\newtheorem{lemma}[theorem]{Lemma}
\newtheorem{corollary}[theorem]{Corollary}
\newtheorem{remark}[theorem]{Remark}
\numberwithin{equation}{section}
\begin{document}
\title{Bounding the $j$-invariant of integral points on modular curves}


\author{Min Sha}
\address{Institut de Math\'ematiques de Bordeaux, Universit\'e Bordeaux 1, 33405 Talence Cedex, France}
\email{shamin2010@gmail.com}

\subjclass[2010]{Primary 11G18, 11J86; Secondary 11G16, 11G50}

\keywords{modular curves, integral points, $j$-invariant,  Baker's method}

\date{}

\thanks{
The author is supported by China Scholarship Council.}

\begin{abstract}
In this paper, we give some effective bounds for the $j$-invariant of integral points on arbitrary modular curves
 over arbitrary number fields assuming that the number of cusps is not less than 3.
\end{abstract}

\maketitle

\section{Introduction}
For a positive integer $N$, let $X(N)$ be the principal modular curve of level $N$. Let
$G$ be a subgroup of $\GL_{2}(\Z/N\Z)$ containing $-1$, and let $X_{G}$ be the
corresponding modular curve.
We denote by $\det G$ the image of $G$ under the determinant map $\det:\GL_{2}(\Z/N\Z)\to (\Z/N\Z)^{*}$.
This curve is defined over $\Q(\zeta_{N})^{\det G}$, where $\zeta_{N}=e^{2\pi i/N}$. So in particular it is defined over $\Q$
if $\det G=(\Z/N\Z)^{*}$.
We denote by~$j$ the standard $j$-invariant function on $X_{G}$.
We use the common notation $\nu_{\infty}(G)$ for the number of cusps of $X_{G}$.

Let $K_{0}$ be a number field containing $\Q(\zeta_{N})^{\det G}$. Then $X_{G}$ is defined over $K_{0}$.
Let $S_{0}$ be a finite set of absolute values of $K_{0}$,
containing all the Archimedean valuations and normalized with respect to $\Q$.
We call a $K_{0}$-rational point $P\in X_{G}(K_{0})$ an $S_{0}$-integral point if $j(P)\in \mathcal{O}_{S_{0}}$,
where $\mathcal{O}_{S_{0}}$ is the ring of $S_{0}$-integers in $K_{0}$.

By the classical Siegel's finiteness theorem~\cite{Si29}, $X_{G}$ has only finitely many $S_{0}$-integral points
when $X_{G}$ has positive genus or $\nu_{\infty}(G)\ge 3$. But the existing proofs of Siegel's theorem are not effective, that is they don't
 provide with any effective bounds for the $j$-invariant of $S_{0}$-integral points.

 Since 1995, Yuri Bilu and his collaborators have succeeded in getting
effective Siegel's theorem for various classes of modular curves. Bilu \cite[Proposition 5.1]{Bi95} showed that
the $j$-invariant of the $S_{0}$-integral points of $X_{G}$ can be effectively bounded provided that $\nu_{\infty}(G)\ge 3$,
but there was no quantitative version therein. Afterwards, Bilu \cite[Theorem 10]{Bilu02} proved that
the $j$-invariant of integral points of $X_{0}(N)$ could be effectively bounded if $N\not\in\{1,2,3,5,7,13\}$,
and Bilu and Illengo \cite{BI11} obtained similar results for ``almost every" modular curve. But they
still gave no quantitative results.

By using Runge's method, the first explicit bound for the $j$-invariant of the $S_{0}$-integral points of $X_{G}$
was given in \cite[Theorem 1.2]{BP10} when $X_{G}$ satisfies ``Runge condition" which roughly says that all the cusps are not conjugate.
When $G$ is the normalizer of a split Cartan subgroup of $\GL_{2}(\Z/p\Z)$, where $p$ is a prime number,
this bound can be sharply reduced, see \cite[Theorem 6.1]{BP10} and \cite[Theorem 1.1]{BP11}.
In particular, the authors in \cite{BP10,BP11,BPR11} showed various and interesting applications of
these bounds, such as rational points of modular curves \cite{BP10, BPR11}, Serre's uniformity problem
in Galois representation \cite{BP11}, and so on.

Most recently, without Runge condition and by using Baker's method, Bajolet and Sha \cite{BaSh1} gave an explicit bound
for the $j$-invariant of integral points on $X_{\ns}^{+}(p)$, which is
the modular curve of a prime level~$p$ corresponding to the normalizer of a non-split Cartan subgroup of $\GL_2(\Z/p\Z)$, $p\ge 7$.
Furthermore, a general method for computing integral points on $X_{\ns}^{+}(p)$ is developped in~\cite{BaBi11}.

In this paper, we apply Baker's method, based on Matveev \cite{Matveev} and Yu \cite{Yu07},
 to obtain some effective bounds for the $j$-invariant of the integral points on $X_{G}$
without assuming Runge condition but assuming that $\nu_{\infty}(G)\ge 3$.

We denote by $\h(\cdot)$ the usual absolute logarithmic height.
For $P\in X_{G}(\bar{\Q})$, we write $\h(P)=\h(j(P))$. Now we would like to state the main results.

\begin{theorem}\label{main1}
Assume that $K_{0}\subseteq\Q(\zeta_{N})$, $N$ is not a power of any prime, $\nu_{\infty}(G)\ge 3$,
and $S_{0}$ only consists of infinite places.
Then for any $S_{0}$-integral point $P$ on $X_{G}$, we have
$$
\h(P)\le C^{\varphi(N)}N^{\frac{3}{2}\varphi(N)+10}(\log N)^{\frac{5}{2}\varphi(N)-2},
$$
where $C$ is an absolute effective constant and $\varphi(N)$ is the Euler's totient function.
\end{theorem}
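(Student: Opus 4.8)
The plan is to run Baker's method on a carefully chosen $S$-unit equation attached to the cusps of $X_G$. First I would work not over $K_0$ but over a suitable finite extension $K$ containing the coordinates of all cusps of $X_G$ and their conjugates; since $K_0 \subseteq \Q(\zeta_N)$ and $X_G$ together with its cusps is defined over a cyclotomic field, one can take $K \subseteq \Q(\zeta_N)$ as well, so that $[K:\Q] \le \varphi(N)$ — this is where the $\varphi(N)$ in the exponents originates. Enlarge $S_0$ to a set $S$ of places of $K$ consisting of the infinite places together with those dividing $N$ (the cusps are $S$-integral after adjoining these); then $|S|$ is controlled by $\varphi(N)$ and the number of primes dividing $N$, and the contribution of the finite places to the final estimate is a power of $N$.

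Next I would exploit the hypothesis $\nu_\infty(G) \ge 3$: choosing three cusps $c_1, c_2, c_3$, the three functions $j - j(c_i)$ (or, more precisely, suitable modular units supported at the cusps, e.g.\ built from the $q$-expansions / Siegel units, whose divisors are supported on the cuspidal locus) give rise to an $S$-unit equation of the shape $u_1 + u_2 = 1$ (or a three-term variant $\lambda u_1 + \mu u_2 = 1$ with $\lambda,\mu$ controlled algebraic numbers) evaluated at the integral point $P$. The key input from the geometry is that $j(P)$ being an $S$-integer forces $j(P) - j(c_i)$, hence these functions evaluated at $P$, to be $S$-units up to bounded denominators; the needed height estimates for the cusps and for the modular units — in terms of $N$ — should either be quoted from Bilu–Parent type bounds or proved by a direct $q$-expansion estimate. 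Then $\h(P)$ is comparable (up to a power of $N$) to the height of the $S$-unit $u_1$, so it suffices to bound $\h(u_1)$.

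Now apply the theory of linear forms in logarithms — Matveev's estimate at the Archimedean places and Yu's $p$-adic estimate at the finite places — to the relation $u_1 + u_2 = 1$: writing the largest-height unit as (essentially) a product of powers of a fixed set of $S$-units, one gets an inequality of the form $\h(u_1) \le (\text{Baker constant}) \cdot \log \h(u_1) + (\text{lower order})$, which unwinds to an absolute bound. The Baker constant here is a product over $|S|$ factors, each carrying a power of the degree $[K:\Q] \le \varphi(N)$ and of the heights/ramification of the $S$-units (powers of $N$), which is exactly how one lands on the shape $C^{\varphi(N)} N^{\frac{3}{2}\varphi(N)+10}(\log N)^{\frac{5}{2}\varphi(N)-2}$: the $C^{\varphi(N)}$ absorbs the absolute constants in Matveev/Yu raised to the number-of-places power, the $N$-powers come from the heights of the modular units and the residue degrees at primes over $N$, and the $\log N$ powers come from the $\max(\log, \ldots)$ terms in the linear-forms bounds.

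The main obstacle I expect is bookkeeping the dependence on $N$ with the right exponents: one must (i) produce modular units supported at the cusps with explicitly $N$-bounded heights and degrees — this is delicate because on a general $X_G$ the cuspidal group and the available units are less transparent than on $X_0(N)$ or $X(N)$ — and (ii) feed these, together with $[K:\Q] \le \varphi(N)$ and $|S| = O(\varphi(N) + \omega(N))$, into Matveev's and Yu's theorems while tracking every factor, since the final exponents $\tfrac32\varphi(N)$, $\tfrac52\varphi(N)$, $+10$, $-2$ are sharp consequences of how those parameters enter. The hypothesis that $N$ is not a prime power is presumably used to guarantee enough cusps ($\nu_\infty(G)\ge 3$ is assumed, but the arithmetic of having several primes dividing $N$ also helps control the split of the cuspidal locus and keeps the relevant fields inside $\Q(\zeta_N)$); handling the $q$-expansions at the various cusps uniformly will be the technical heart of the estimate.
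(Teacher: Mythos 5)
Your overall strategy --- Baker's method via Matveev/Yu applied to modular units, working over a cyclotomic field of degree $\varphi(N)$ --- matches the paper's, but there are two genuine gaps in how you set it up, and one place where you diverge from the paper in a way that would change the exponents.

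First, you have the role of the hypothesis ``$N$ is not a prime power'' wrong: it is not about having enough cusps (that is the separate hypothesis $\nu_\infty(G)\ge 3$). The paper uses it via the explicit computation $\prod_{\mathbf a\in\mathcal{A}_N}u_{\mathbf a}=\pm\Phi_N(1)^{12N}=\pm 1$ (Proposition~\ref{u1}), which forces the Siegel-unit product $W(P)$ to be an \emph{actual} unit in $\mathcal{O}_S$ with $S=M_K^\infty$, so that no finite places need to be added to $S$. Your proposal to enlarge $S$ by the places over $N$ is exactly the step the paper is at pains to avoid: it would increase $|S|$ (hence $r$, hence $C^r$ and $r^{2r}\zeta^r$ in the final expression), and, more importantly, it would bring in Yu's $p$-adic bound with its $p^d$ factor at the possibly non-Archimedean place $w$ where $|j(P)|_w$ is large, whereas under the hypotheses of Theorem~\ref{main1} the place $w$ is Archimedean and only Matveev's theorem is needed. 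So your route would produce a bound of a visibly different (worse) shape, not the one stated.

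Second, the unit-equation device you sketch doesn't quite work as written. The functions $j-j(c_i)$ are not supported at the cusps --- the cusps are the \emph{poles} of $j$, so $j(c_i)=\infty$ and $j-j(c_i)$ is meaningless. The paper instead builds modular units $u_{\mathcal O}$ from products of Siegel functions over $G_1$-orbits in $\mathcal{A}_N$, forms $W$ so that $\ord_{c_w}W=0$ at the $w$-nearby cusp $c_w$, and uses the $q$-expansion to get $\gamma_w^{-1}W(P)=1+O_w\bigl(4^{24N^7}|q_w(P)|_w^{1/N}\bigr)$; Baker's lower bound is applied directly to the linear form $\Lambda-1$ with $\Lambda=\gamma_w^{-1}W(P)=\eta_0\eta_1^{b_1}\cdots\eta_r^{b_r}$. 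This is not a two-term $S$-unit equation $u_1+u_2=1$: the ``small'' term on the right is not required to be a unit, only small, which is precisely why the $q$-expansion of a single well-chosen modular unit suffices. Finally, the independence needed to guarantee $W$ is non-constant comes from $\nu_\infty(G)\ge 3$ together with the Manin--Drinfeld theorem (Proposition~\ref{munit}(vi)), which you gesture at but do not invoke; without it there is no contradiction to extract from Baker's inequality.
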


Actually, we obtain a more general Theorem \ref{main2} below, which applies to any number field and any ring of $S_{0}$-integers in it.

Put $d_{0}=[K_{0}:\Q]$ and $s_{0}=|S_{0}|$.
We define the following quantity
\begin{equation}\label{Delta}
\Delta=d_{0}^{-d_{0}}\sqrt{N^{d_{0}N}|D_{0}|^{\varphi(N)}}\left(\log(N^{d_{0}N}|D_{0}|^{\varphi(N)})\right)^{d_{0}\varphi(N)}
\left(\prod\limits_{\substack{v\in S_{0}\\v\nmid \infty}}\log\NN_{K_{0}/\Q}(v)\right)^{\varphi(N)},
\end{equation}
where $D_{0}$ is the absolute discriminant of $K_{0}$, and the norm of a finite place is, by definition, the
absolute norm of the corresponding prime ideal.
We denote by $p$ the maximal rational prime below $S_{0}$, with the convention $p=1$ if $S_{0}$ consists only
of the infinite places.

\begin{theorem}\label{main2}
Assume that $N$ is not a power of any prime and $\nu_{\infty}(G)\ge 3$.
Then for any $S_{0}$-integral point $P$ on $X_{G}$, we have
$$
\h(P)\le \left(Cd_{0}s_{0}N^{2}\right)^{2s_{0}N}(\log(d_{0}N))^{3s_{0}N}p^{d_{0}N}\Delta,
$$
where $C$ is an absolute effective constant.
\end{theorem}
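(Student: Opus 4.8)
The plan is to run Baker's method on a suitable unit equation attached to the cusps of $X_G$. First I would pass to the Galois closure: working over $\Q(\zeta_N)$ (or a field containing it), all cusps of $X_G$ become rational, so the principal curve $X(N)$ sits above $X_G$ and one has explicit Siegel/modular units $u_{\mathbf{a}}$ indexed by the cusps, whose divisors are supported on the cusps and whose $q$-expansions are known classically (Kubert--Lang). The key structural input is that, because $\nu_\infty(G)\ge 3$, one can choose three cusps $c_1,c_2,c_3$ and form a function of the shape $g=(u_{c_1}-u_{c_2})/(u_{c_1}-u_{c_3})$ that takes the values $0,1,\infty$ at three points and has all its poles and zeros among the cusps; evaluating at an $S_0$-integral point $P$ and letting $K$ be the compositum of $K_0$ with $\Q(\zeta_N)$, with $S$ the set of places of $K$ above $S_0$ together with the places above $N$, one gets that $g(P)$ and $1-g(P)$ are $S$-units in $K$, i.e. an $S$-unit equation $\alpha+\beta=1$.

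Next I would bound the relevant parameters of this unit equation in terms of $N$, $d_0$, $s_0$, $|D_0|$ and $p$. The degree $[K:\Q]$ is at most $d_0\varphi(N)$; the cardinality $|S|$ is at most $s_0+$ (number of primes of $K$ above $N$), which is $O(s_0 N)$ up to the ramification/splitting in $\Q(\zeta_N)/\Q$; the discriminant $D_K$ is controlled by $|D_0|^{\varphi(N)}N^{d_0 N}$ up to bounded factors (conductor-discriminant); and the heights of $\alpha,\beta$ as well as the regulator and the heights of a fundamental system of $S$-units are bounded via the explicit $q$-expansions of the modular units together with standard estimates (this is where the $\Delta$ quantity in~\eqref{Delta} is engineered to appear). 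Then I would apply the Matveev \cite{Matveev} lower bound for linear forms in logarithms at the Archimedean places and Yu's \cite{Yu07} $p$-adic analogue at the finite places of $S$ (this is the source of the $p^{d_0N}$ factor), in the standard Baker-type argument: choosing an embedding where $|g(P)|$ is extremal, comparing the archimedean/non-archimedean size of $g(P)-\zeta$ for an appropriate root of unity or cusp value $\zeta$ against the linear-forms-in-logs lower bound, and solving the resulting inequality for $\h(g(P))$. Finally I would translate a bound on $\h(g(P))$ back to a bound on $\h(j(P))=\h(P)$ using that $j$ is a rational function of bounded degree and height in $g$ on $X(N)$ — more precisely, using the known bound for the degree $[\Q(j):\Q(g)]$ and the height of the coefficients of the modular polynomial relating them, which again contributes only powers of $N$.

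The main obstacle I expect is the bookkeeping of all the field-theoretic and height quantities uniformly in $N$ and in the arbitrary number field $K_0$: one must bound the discriminant of $K=K_0(\zeta_N)$, the number and residue degrees of primes above $N$, the regulator $R_S$ of $S$-units of $K$, and the heights of the modular units $u_{\mathbf{a}}$ evaluated at $P$, each with fully explicit constants, and then feed these into Matveev's and Yu's theorems — whose own dependence on the degree, on $|S|$, and on $p$ must be tracked — so that after solving the final inequality everything collapses into the stated product $\left(Cd_0s_0N^2\right)^{2s_0N}(\log(d_0N))^{3s_0N}p^{d_0N}\Delta$. A secondary difficulty is ensuring that the three chosen cusps genuinely yield a nonconstant $g$ with the claimed divisor and that the construction is uniform over all $G$ with $\nu_\infty(G)\ge 3$; here I would invoke Bilu's argument from \cite[Proposition 5.1]{Bi95} (or the Kubert--Lang theory of the cuspidal divisor class group, using that $N$ is not a prime power so that enough independent modular units exist) to guarantee that such a unit combination exists, and then make it effective.
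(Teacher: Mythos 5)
Your overall strategy — pass to $K=K_0(\zeta_N)$, exploit modular/Siegel units with explicit $q$-expansions, apply Matveev and Yu, and then do the bookkeeping — is in the same spirit as the paper, but the central construction you propose does not work, and the paper's actual construction is genuinely different.

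The key problem is your function $g=(u_{c_1}-u_{c_2})/(u_{c_1}-u_{c_3})$. You claim its zeros and poles lie among the cusps, so that $g(P)$ and $1-g(P)$ are $S$-units, giving an equation $\alpha+\beta=1$. But a \emph{difference} of two modular units is not a modular unit: $u_{c_1}-u_{c_2}$ vanishes wherever $u_{c_1}=u_{c_2}$, and these points are typically non-cuspidal. So $g$ and $1-g$ have zeros and poles off the cusps, and $g(P)$, $1-g(P)$ are not $S$-units. Reducing to a genuine unit equation in this way would require a very different construction (essentially a map $X_G\to\mathbb{P}^1$ sending all cusps into $\{0,1,\infty\}$, which is what Bilu's 1995 argument produces non-explicitly via Chevalley--Weil, but is nontrivial to make quantitative). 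The paper avoids this entirely. It never forms an $S$-unit equation $\alpha+\beta=1$. Instead, it works with a single modular unit $W$ built from the orbit units $u_{\OO}$ (products of $u_{\mathbf{a}}$ over a $G_1$-orbit $\OO\subset\mathcal{A}_N$), chosen so that $\ord_{c_w}W=0$ at the $w$-nearby cusp $c_w$ of $P$; when $\ord_{c_w}u_{\OO}\ne 0$ one takes $W=u_{\OO}^{\ord_{c_w}u_{\OO'}}u_{\OO'}^{-\ord_{c_w}u_{\OO}}$ for a second, multiplicatively independent orbit $\OO'$ (this is where $\nu_\infty(G)\ge3$ and Proposition~\ref{munit}(vi) are used). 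The $q$-expansion near $c_w$ then shows $\Lambda=\gamma_w^{-1}W(P)$ is exponentially close to $1$ in $|\cdot|_w$, and this single quantity $\Lambda-1$ is the linear form fed into Matveev and Yu, after writing $W(P)$ as $\omega\eta_1^{b_1}\cdots\eta_r^{b_r}$ in a Siegel-type fundamental system of $S$-units.

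Two further points of divergence. First, you propose enlarging $S$ by the primes above $N$ to make the units into $S$-units; the paper does not do this. Instead it uses Proposition~\ref{u1}: when $N$ has at least two distinct prime factors, $\prod_{\mathbf{a}\in\mathcal{A}_N}u_{\mathbf{a}}=\pm1$, so each $u_{\OO}$ is already an $S$-unit at $P$ without adding places. This is precisely why the theorem is stated under the hypothesis that $N$ is not a prime power, and why Theorem~\ref{main3} handles prime powers by raising the level to $M$. If you enlarge $S$ instead, the $S$-regulator and the quantity $\Delta$ change, and the stated bound would need to be reworked. Second, the paper must (and does) treat the degenerate case $\Lambda=1$ separately (Section~\ref{specialcase}); your proposal does not address the analogous degeneracy in the unit-equation framework. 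In short, the bookkeeping you outline in the second half is on the right track, but the first half — the construction that produces the Baker linear form — needs to be replaced by the paper's single-modular-unit argument.
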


The situation is different when $N$ is a prime power, see Section \ref{prime power}.  In this case we define
\begin{equation}
M=\left\{ \begin{array}{ll}
                 2N & \textrm{if $N$ is not a power of 2},\\
                  \\
                 3N & \textrm{if $N$ is a power of 2}.
                 \end{array} \right.
\notag
\end{equation}
Notice that $X_{G}$ is also a modular curve of level $M$.
\begin{theorem}\label{main3}
Assume that $N$ is a power of some prime and $\nu_{\infty}(G)\ge 3$.
Then for any $S_{0}$-integral point $P$ on $X_{G}$, we can get two upper bounds for
$\h(P)$ by replacing $N$ by $M$ in Theorem \ref{main1} and \ref{main2}.
\end{theorem}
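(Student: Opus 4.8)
The plan is to reduce Theorem \ref{main3} to the already-established Theorems \ref{main1} and \ref{main2} by a change of level. The key observation, already flagged in the excerpt, is that a modular curve of level $N$ is also a modular curve of level $M$ for any multiple $M$ of $N$: if $G\subseteq\GL_2(\Z/N\Z)$ and $\pi\colon\GL_2(\Z/M\Z)\to\GL_2(\Z/N\Z)$ is the reduction map, then $G'=\pi^{-1}(G)$ is a subgroup of $\GL_2(\Z/M\Z)$ containing $-1$, and $X_{G'}=X_G$ as curves equipped with the $j$-invariant function, because the covering $X(M)\to X(N)$ factors through the tower in the evident way. Moreover $\det G'=(\Z/M\Z)^{*}$ pulls back compatibly, so the field of definition $\Q(\zeta_M)^{\det G'}$ equals $\Q(\zeta_N)^{\det G}$, and the notion of $S_0$-integral point is unchanged since $j$ is unchanged. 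Crucially, the number of cusps can only stay the same or increase under $X(M)\to X(N)$ in the sense that $\nu_\infty(G')\ge\nu_\infty(G)\ge 3$ — actually the cusps of $X_{G'}$ and $X_G$ coincide as points of the curve since the curves coincide — so the hypothesis $\nu_\infty\ge 3$ is preserved.

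First I would verify that $M$ as defined is genuinely not a prime power: if $N=\ell^k$ with $\ell$ odd, then $M=2N=2\ell^k$ has two distinct prime divisors; if $N=2^k$, then $M=3N=3\cdot 2^k$ likewise has two distinct prime divisors. So $M$ satisfies the hypothesis ``$M$ is not a power of any prime'' required by Theorems \ref{main1} and \ref{main2}. Second, I would check the hypothesis $K_0\subseteq\Q(\zeta_M)$ needed for the analogue of Theorem \ref{main1}: since $N\mid M$ we have $\Q(\zeta_N)\subseteq\Q(\zeta_M)$, and we are given (in the situation where we want to apply the analogue of Theorem \ref{main1}) that $K_0\subseteq\Q(\zeta_N)$, hence $K_0\subseteq\Q(\zeta_M)$; the other hypotheses ($S_0$ consists of infinite places only, $\nu_\infty\ge 3$) transfer verbatim. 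Then applying Theorem \ref{main1} to the level-$M$ description $X_{G'}$ of $X_G$ gives
$$
\h(P)=\h(j(P))\le C^{\varphi(M)}M^{\frac32\varphi(M)+10}(\log M)^{\frac52\varphi(M)-2},
$$
which is exactly the bound of Theorem \ref{main1} with $N$ replaced by $M$. Similarly, applying Theorem \ref{main2} to $X_{G'}$ — noting that $d_0,s_0,D_0,S_0,p$ are all intrinsic to $K_0$ and $S_0$ and do not change, while every occurrence of $N$ in the statement and in $\Delta$ is replaced by $M$ — yields the second claimed bound.

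The one genuinely substantive point, and the place where I expect to spend the most care, is justifying the identification $X_{G'}=X_G$ together with the claim that $\nu_\infty(G')=\nu_\infty(G)$ and that the $j$-function, the field of definition, and the $S_0$-integrality condition are all preserved under this identification. This is standard modular-curves bookkeeping: one works with the functor (or the explicit analytic model $\Gamma_G\backslash\HH^{*}$ where $\Gamma_G$ is the preimage of $G\cap\SL_2$ in $\SL_2(\Z)$) and observes that the congruence subgroup $\Gamma_{G'}\subseteq\SL_2(\Z)$ attached to $G'$ equals $\Gamma_G$ because reduction mod $M$ followed by reduction mod $N$ is reduction mod $N$. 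Hence the quotient Riemann surfaces, their cusps, and the function field (in which $j$ sits) are literally the same. The Galois action determining the field of definition is computed via $\det$, which is likewise compatible with the reduction maps, so $\Q(\zeta_M)^{\det G'}=\Q(\zeta_N)^{\det G}\subseteq K_0$. Once this identification is in place, Theorem \ref{main3} follows immediately by invoking Theorems \ref{main1} and \ref{main2} for the level-$M$ model, with no further estimation required.
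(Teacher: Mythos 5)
Your proof is correct and follows essentially the same route as the paper: raise the level to $M$, identify $X_G$ with the level-$M$ modular curve $X_{\widetilde G}$ attached to the preimage $\widetilde G=\pi^{-1}(G)$ in $\GL_2(\Z/M\Z)$, note that $j$, the cusps, the field of definition, and $S_0$-integrality are unchanged and that $M$ has two distinct prime factors, and then invoke Theorems \ref{main1} and \ref{main2} at level $M$. (One minor slip: the clause ``$\det G'=(\Z/M\Z)^{*}$ pulls back compatibly'' should read that $\det G'$ is the full preimage of $\det G$ under $(\Z/M\Z)^{*}\to(\Z/N\Z)^{*}$, from which $\Q(\zeta_M)^{\det G'}=\Q(\zeta_N)^{\det G}$ does indeed follow; your conclusion is right.)
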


\section{Notations and conventions}

Through out this paper, $\log$ stands for two different objects without confusion according to the context. One is the principal branch of the complex logarithm, in this case will use the following estimate without special reference
$$
|\log(1+z)|\le\frac{|\log(1-r)|}{r}|z|,
$$
for $|z|\le r<1$, see \cite[Formula (4)]{BP10}. The other one is the $p$-adic logarithm function, for example see \cite[Chapter IV Section 2]{Koblitz}.

Let $\mathcal{H}$ denote the Poincar$\acute{\rm e}$ upper half-plane: $\mathcal{H}=\{\tau\in\C: {\rm Im} \tau>0\}$.
For $\tau\in\mathcal{H}$, put $q_{\tau}=e^{2\pi i\tau}$.
We also put $\bar{\HH}=\HH\cup\Q\cup\{i\infty\}$. If $\Gamma$ is the pullback of $G\cap\SL_{2}(\Z/N\Z)$ to $\SL_{2}(\Z)$,
then the set $X_{G}(\C)$ of complex points is analytically isomorphic to the quotient $\bar{\HH}/\Gamma$,
supplied with the properly defined topology and analytic structure.
Moreover, the modular invariant $j$ defines a non-constant rational function on $X_{G}$, whose poles are exactly the cusps.
See any standard reference like \cite{La76, Shimura} for all the missing details.

For  $\textbf{a}=(a_{1},a_{2})\in \Q^{2}$,  we put $\ell_{\textbf{a}}
=B_{2}(a_{1}-\lfloor a_{1} \rfloor)/2$, where  $B_{2}(T)=T^{2}-T+\frac{1}{6}$ is the second Bernoulli polynomial and
$\lfloor a_{1} \rfloor$ is the largest integer not greater than $a_1$.
Obviously $|\ell_{\textbf{a}}|\le 1/12$, this will be used without special reference.

Let $\mathcal{A}_{N}$ be the subset of abelian group $(N^{-1}\Z/\Z)^{2}$ consisting of the elements with exact order $N$.
Obviously,
 $$
 |\mathcal{A}_{N}|=N^{2}\prod\limits_{p|N}(1-p^{-2})<N^{2},
 $$
 the product runs through all primes dividing $N$.
 Moreover we always choose a representative element of
 ${\textbf{a}=\left(a_{1},a_{2}\right)}\in (N^{-1}\Z/\Z)^{2}$ satisfying ${0 \le a_{1},a_{2} <1}$.
 So in the sequel for every $\textbf{a}\in (N^{-1}\Z/\Z)^{2}$, we have $\ell_{\textbf{a}}=B_{2}(a_{1})/2$.

Throughout this paper, we fix an algebraic closure $\bar{\Q}$ of $\Q$, which is assumed to be a subfield of $\C$.
Every number field used in this paper is presumed to be a subfield of $\bar{\Q}$.

For a number field $K$, we denote by $M_{K}$ the set of all valuations (or places) of $K$ extending the standard infinite and
$p$-adic valuations of $\Q$: $|2|_{v}=2$ if $v\in M_{K}$ is infinite, and $|p|_{v}=p^{-1}$ if $v$ extends the $p$-adic
valuation of $\Q$. We denote by $M_{K}^{\infty}$ and $M_{K}^{0}$ the subsets of $M_{K}$ consisting of the infinite
(Archimedean) and the finite (non-Archimedean) valuations, respectively.

Given a number field $K$ of degree $d$, for any $v\in M_{K}$, $K_{v}$ is the completion of $K$ with respect to the valuation $v$ and $\bar{K}_{v}$
its algebraic closure. We still denote by $v$ the unique extension of $v$ in $\bar{K}_{v}$.
Let $d_{v}=[K_{v}:\Q_{v}]$ be the local degree of $v$.

For a number field $K$ of degree $d$, the absolute logarithmic height of an algebraic number $\alpha\in K$ is
defined by $\h(\alpha)=d^{-1}\sum_{v\in M_{K}}d_{v}\log^{+}|\alpha|_{v}$, where $\log^{+}|\alpha|_{v}=\log\max\{|\alpha|_{v},1\}$.

Throughout the paper the symbol $\ll$ implies an absolute effective constant.
We also use the notation $O_{v}(\cdot)$. Precisely, $A=O_{v}(B)$ means that $|A|_{v}\le B$.

\section{Preparations}

In this section, we assume that $N\ge 2$.

\subsection{Siegel functions}
Let $\textbf{a}=(a_{1},a_{2})\in \Q^{2}$ be such that $\textbf{a}\not\in\Z^{2}$, and let $g_{\textbf{a}}: \HH\to\C$
 be the corresponding
\textit{Siegel function}, see \cite[Section 2.1]{KL81}. We have the following infinite product presentation for $g_{\textbf{a}}$,
see \cite[Formula (7)]{BP10},
$$
g_{\textbf{a}}(q_{\tau})=-q_{\tau}^{B_{2}(a_{1})/2}e^{\pi ia_{2}(a_{1}-1)}\prod\limits_{n=0}^{\infty}(1-q_{\tau}^{n+a_{1}}e^{2\pi ia_{2}})(1-q_{\tau}^{n+1-a_{1}}e^{-2\pi ia_{2}}).
$$

For the elementary properties of $g_{\textbf{a}}$, please see \cite[Pages 27-31]{KL81}.
Especially, the order of vanishing of $g_{\textbf{a}}$ at $i\infty$ (i.e., the only rational number $\ell$
such that the limit $\lim_{\tau\to i\infty}q_{\tau}^{-\ell}g_{\textbf{a}}$ exists and is non-zero) is equal to $\ell_{\textbf{a}}$.

For a number field $K$ and $v\in M_{K}$, we define $g_{\textbf{a}}(q)$ as the above, where $q\in \bar{K}_{v}$ satisfies $|q|_{v}<1$.
Notice that here we should fix $q^{1/(12N^{2})}\in \bar{K}_{v}$, then everything is well defined.

Given two positive integers $k$ and $\ell$, we denote by $P_{k}$ the set of partitions of $k$ into positive summands, and let
 $p_{\ell}(k)$ be the number of partitions of $k$ into exactly $\ell$ positive summands.
By \cite[Theorem 14.5]{Apostol}, we easily get
$$
|P_{k}|<e^{k/2}, \quad k\ge 64.
$$
Then according to the table of partitions or computer calculations, we can obtain
$$
|P_{k}|<e^{k/2}, \quad k\ge 1.
$$
 \begin{proposition}\label{ga1}
Let ${\rm\bf a}\in \mathcal{A}_{N}$. If $q\in \bar{K}_{v}$ satisfies $|q|_{v}<1$, then we have
\begin{equation}
-q^{-\ell_{\rm\bf a}}\gamma_{{\rm\bf a}}^{-1}g_{{\rm\bf a}}(q)=1+\sum\limits_{k=1}^{\infty}\phi_{{\rm\bf a}}(k)q^{k/N},
\notag
\end{equation}
where
\begin{equation}
\gamma_{{\rm\bf a}}=\left\{ \begin{array}{ll}
                 e^{\pi ia_{2}(a_{1}-1)} & \textrm{if $a_{1}\ne 0$},\\
                 e^{-\pi ia_{2}}(1-e^{2\pi ia_{2}}) & \textrm{if $a_{1}=0$};
                 \end{array} \right.
\notag
\end{equation}
and
$$
\phi_{{\rm\bf a}}(k)=\sum\limits_{\ell\in S^{1}_{{\rm\bf a}k}}m_{\ell}(-e^{2\pi ia_{2}})^{\ell}+\sum\limits_{\ell\in S^{2}_{{\rm\bf a}k}}m_{\ell}^{\prime}(-e^{-2\pi ia_{2}})^{\ell}
+\sum\limits_{\ell\in S^{3}_{{\rm\bf a}k}}\sum\limits_{(\ell_{1},\ell_{2})\in T^{\ell}_{{\rm\bf a}k}}m_{\ell_{1}\ell_{2}}(-e^{2\pi ia_{2}})^{\ell_{1}}(-e^{-2\pi ia_{2}})^{\ell_{2}},
$$
where $S^{1}_{{\rm\bf a}k}$, $S^{2}_{{\rm\bf a}k}$ and $S^{3}_{{\rm\bf a}k}$ are three subsets of $\{1,2,\cdots,\lfloor k/N\rfloor+1\}$,
$T^{\ell}_{{\rm\bf a}k}$ is a subset of $\{(\ell_{1},\ell_{2}):1\le \ell_{1},\ell_{2}\le\lfloor k/N\rfloor+1,\ell_{1}+\ell_{2}=\ell\}$,
and $m_{\ell},m_{\ell}^{\prime}, m_{\ell_{1}\ell_{2}}$ are some positive integers.
In particular, we have
$$
|\phi_{{\rm\bf a}}(k)|_{v}\le e^{k}.
$$
\end{proposition}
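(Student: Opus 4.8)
The plan is to expand the infinite product defining $g_{\bf a}$ directly and track the combinatorics. Starting from the product presentation of $g_{\bf a}(q_\tau)$ quoted above, I would divide by $-q_\tau^{B_2(a_1)/2}e^{\pi i a_2(a_1-1)}$ (when $a_1\neq 0$) to obtain $\prod_{n\ge 0}(1-q^{n+a_1}e^{2\pi i a_2})(1-q^{n+1-a_1}e^{-2\pi i a_2})$. Since ${\bf a}\in\mathcal A_N$ we have $0<a_1<1$ and $a_1\in N^{-1}\Z$, so the exponents $n+a_1$ and $n+1-a_1$ are positive rationals in $N^{-1}\Z$; writing everything in terms of $q^{1/N}$ and expanding the product as a sum over finite subsets of factors gives a power series $1+\sum_{k\ge 1}\phi_{\bf a}(k)q^{k/N}$. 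The case $a_1=0$ is treated by pulling the $n=0$ factor $(1-e^{2\pi i a_2})$ into the normalizing constant $\gamma_{\bf a}$, after which the remaining product runs over $n\ge 1$ with positive integer exponents $n$ and $n$; this explains the two cases in the definition of $\gamma_{\bf a}$.

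Next I would identify the coefficients. Expanding $\prod_{n}(1-q^{n+a_1}e^{2\pi i a_2})$ contributes, to the coefficient of $q^{k/N}$, terms indexed by ways of writing $k/N$ as a sum of distinct numbers of the form $n+a_1$; grouping by the number $\ell$ of factors chosen, the multiplicity becomes a count of partitions, whence the integers $m_\ell$ and the sign $(-e^{2\pi i a_2})^\ell$. Symmetrically the second product gives the $(-e^{-2\pi i a_2})^\ell$ terms, and cross-terms between the two products give the double sum over $(\ell_1,\ell_2)$. The index sets $S^1_{{\bf a}k}, S^2_{{\bf a}k}, S^3_{{\bf a}k}$ and $T^\ell_{{\bf a}k}$ are exactly the sets of achievable values of $\ell$ (resp. $(\ell_1,\ell_2)$); the crucial point is the range bound $\ell\le \lfloor k/N\rfloor+1$, which holds because each chosen factor contributes an exponent $\ge a_1 > 0$ (or $\ge 1-a_1$), and more carefully because the smallest $\ell$ exponents available sum to at least something exceeding $k/N$ once $\ell$ is too large — I would check that $\ell$ distinct exponents from $\{a_1, 1+a_1, 2+a_1,\dots\}$ sum to at least $\binom{\ell}{2}+\ell a_1 > (\ell-1)\ell/2$, forcing $\ell = O(\sqrt{k/N})$, which certainly gives $\ell \le \lfloor k/N\rfloor+1$; getting the clean stated bound is the one genuinely fiddly step.

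For the final estimate $|\phi_{\bf a}(k)|_v\le e^k$, I would argue uniformly in $v$ by a crude majorization. Each $m_\ell, m'_\ell, m_{\ell_1\ell_2}$ is bounded by a number of partitions of $k$, hence by $|P_k| < e^{k/2}$ using the bound established just before the proposition. The roots of unity $e^{\pm 2\pi i a_2}$ have absolute value $1$ at every archimedean $v$, and at a non-archimedean $v$ they are $v$-adic units so $|e^{\pm 2\pi i a_2}|_v = 1$ as well (here one uses that $a_2\in N^{-1}\Z$ so $e^{2\pi i a_2}$ is a root of unity in $\bar\Q$, and the fixed choice of $q^{1/(12N^2)}$ makes these expressions well-defined in $\bar K_v$). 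The number of summands in $\phi_{\bf a}(k)$ is at most $|S^1_{{\bf a}k}|+|S^2_{{\bf a}k}|+\sum_\ell |T^\ell_{{\bf a}k}|$, which is crudely at most $(\lfloor k/N\rfloor+1)^2 + 2(\lfloor k/N\rfloor+1) \le$ something polynomial in $k$; so $|\phi_{\bf a}(k)|_v \le (\text{poly in }k)\cdot e^{k/2} \le e^k$ for all $k\ge 1$, the last inequality being a routine check (and anyway one may absorb the polynomial factor since $e^{k/2}\cdot k^2 \le e^k$ for $k\ge 1$). The main obstacle is purely bookkeeping: setting up the index sets $S^i_{{\bf a}k}$, $T^\ell_{{\bf a}k}$ precisely enough that the range $\{1,\dots,\lfloor k/N\rfloor+1\}$ is correct and that the $m$'s are visibly positive integers bounded by $|P_k|$; the analytic content is negligible.
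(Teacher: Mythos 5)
The expansion step and the identification of $\gamma_{\bf a}$ are essentially the paper's argument, and the observation that only the finitely many factors with exponent $\le k$ contribute to the coefficient of $q^{k/N}$ is the right way to obtain the range $\{1,\ldots,\lfloor k/N\rfloor+1\}$. (Your alternative argument via $\binom{\ell}{2}\le k/N$, hence $\ell=O(\sqrt{k/N})$, is more roundabout and does not obviously yield the clean bound; the paper simply notes that the relevant exponent set, being contained in an arithmetic progression of common difference $N$ with all terms $\le k$, has at most $\lfloor k/N\rfloor+1$ elements.)

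The genuine gap is in the final estimate $|\phi_{\bf a}(k)|_v\le e^k$, on two counts. First, the assertion that each $m_{\ell_1\ell_2}\le|P_k|$ is not justified in the case $a_1\in\{0,1/2\}$: there the two exponent sets $S_1$ and $S_2$ coincide, so a single partition $x$ of $k$ can be realized by many pairs $(A,B)$ with $A\subseteq S_1$, $B\subseteq S_2$ (each singleton part of $x$ can go to either side), and $m_{\ell_1\ell_2}$ can be as large as roughly $2^{\ell_1+\ell_2}p_{\ell_1+\ell_2}(k)$, not $|P_k|$. Second, even granting your individual bounds, majorizing $|\phi_{\bf a}(k)|_v$ by (number of summands)$\times$(max coefficient) $\approx\bigl(\lfloor k/N\rfloor+1\bigr)^2 e^{k/2}\le k^2e^{k/2}$ and then claiming $k^2e^{k/2}\le e^k$ for all $k\ge1$ is false: the inequality $k^2\le e^{k/2}$ fails for $k=2,\ldots,8$ (e.g.\ $k=4$ gives $16>e^2\approx7.39$), and this cannot be brushed aside as a routine check. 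The paper avoids both problems by reorganizing the sum: it bounds $|\phi_{\bf a}(k)|_v\le\sum_{x\in P_k}m_x$, where $m_x$ counts how many terms of the expansion give rise to the partition $x$, and then splits on whether $S_1\cap S_2=\emptyset$. In the disjoint case ($a_1\ne 0,1/2$) each $x$ arises from at most one term, so the sum is $\le|P_k|<e^{k/2}$; in the coincident case each $x$ with $\ell$ parts arises from at most $2^\ell$ terms with $\ell\le\lfloor k/N\rfloor$, giving $\le 2^{\lfloor k/N\rfloor}|P_k|<e^k$ (with an explicit check for $\lfloor k/N\rfloor\le 2$). This decomposition by partitions rather than by $(\ell_1,\ell_2)$-indices is the structural idea your argument is missing, and it is what makes the clean bound $e^k$ attainable.
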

\begin{proof}
In this proof, we fix an integer $k\ge 1$.

Suppose that $a_{1}=k_{1}/N$ with $0\le k_{1}\le N-1 $.
Let $S_{1}=\{nN+k_{1}:0\le n\le \lfloor k/N\rfloor, 0<nN+k_{1}\le k\}$ and
$S_{2}=\{nN+N-k_{1}:0\le n\le \lfloor k/N\rfloor, nN+N-k_{1}\le k\}$.
It is easy to see that if $k_{1}=0$ or $N/2$, then $S_{1}=S_{2}$; otherwise $S_{1}\cap S_{2}= \emptyset$.

Notice that the coefficient $\phi_{{\rm\bf a}}(k)$ of $q^{k/N}$ equals to the coefficient of $q^{k}$
in the expansion of the following finite product,
\begin{equation}\label{expansion}
\prod\limits_{n\in S_{1}}(1-q^{n}e^{2\pi ia_{2}})\prod\limits_{n\in S_{2}}(1-q^{n}e^{-2\pi ia_{2}}).
\end{equation}
If $S_{1}$ and $S_{2}$ are both empty, then the coefficient $\phi_{{\rm\bf a}}(k)=0$.

We say $\ell\in S^{1}_{{\rm\bf a}k}$ if and only if there exist $\ell$ positive integers in
$S_{1}$ such that the sum of them equals to $k$,
and let $m_{\ell}$ count the number of different ways.
Similarly for the definitions of $S^{2}_{{\rm\bf a}k}$ and $m_{\ell}^{\prime}$.

We say $\ell\in S^{3}_{{\rm\bf a}k}$ if and only if there exist $\ell_{1}$ positive integers in $S_{1}$
and $\ell_{2}$ positive integers in
$S_{2}$ such that
the sum of them equals to $k$, then
$(\ell_{1},\ell_{2})\in T^{\ell}_{{\rm\bf a}k}$ and let $m_{\ell_{1}\ell_{2}}$ count the number of different ways.

Then the desired expression of $\phi_{{\rm\bf a}}(k)$ follows easily from the definitions.

For each element $x\in P_{k}$, let $m_{x}$ be the number of the times of
$x$ appearing in the expansion of (\ref{expansion}). Then we obtain
\begin{align*}
|\phi_{{\rm\bf a}}(k)|_{v}&\le\sum\limits_{\ell\in S^{1}_{{\rm\bf a}k}}m_{\ell}+\sum\limits_{\ell\in S^{2}_{{\rm\bf a}k}}m_{\ell}^{\prime}+
\sum\limits_{\ell\in S^{3}_{{\rm\bf a}k}}\sum\limits_{(\ell_{1},\ell_{2})\in T^{\ell}_{{\rm\bf a}k}}m_{\ell_{1}\ell_{2}}\\
&= \sum\limits_{x\in P_{k}}m_{x}.
\end{align*}

If $k_{1}\ne 0$ and $N/2$, then $S_{1}\cap S_{2}=\emptyset$. So for each $x\in P_{k}$, we have $m_{x}=0$ or 1.
Hence,
$$
\sum\limits_{x\in P_{k}}m_{x} \le |P_{k}|<e^{k/2}.
$$

If $k_{1}=0$ or $N/2$, then $S_{1}=S_{2}$. Suppose that $\lfloor k/N\rfloor\ge 3$.
Given $x\in P_{k}$ with $\ell$ entries, if $\ell\le \lfloor k/N\rfloor$, then we have $m_{x}\le 2^{\ell}$; otherwise we have $m_{x}=0$.
Hence,
$$
\sum\limits_{x\in P_{k}}m_{x} \le
\sum\limits_{\ell\le \lfloor k/N\rfloor}2^{\ell}p_{\ell}(k)\le 2^{\lfloor k/N\rfloor}|P_{k}|<e^{k}.
$$
If $\lfloor k/N\rfloor\le 2$, one can verify the inequality by explicit computations.
\end{proof}

\subsection{Modular units on $X(N)$}

Recall that by a modular unit on a modular curve we mean that a rational function having poles and zeros only at the cusps.

For $\textbf{a}\in (N^{-1}\Z/\Z)^{2}$, we denote $g_{\textbf{a}}^{12N}$ by $u_{\textbf{a}}$, which
is a modular unit on  $X(N)$.
Moreover, we have $u_{\textbf{a}}=u_{\textbf{a}^{\prime}}$ when $\textbf{a}\equiv \textbf{a}^{\prime}$ mod $\Z^{2}$.
Hence, $u_{\textbf{a}}$ is well-defined when ${\textbf{a}}$ is a non-zero element of the abelian group $(N^{-1}\Z/\Z)^{2}$.
Moreover, $u_{\textbf{a}}$ is integral over $\Z[j]$.
For more details, see \cite[Section 4.2]{BP10}.

Furthermore, the Galois action on the set $\{u_{\textbf{a}}\}$ is compatible with the right linear action of
$\GL_{2}(\Z/N\Z)$ on it. That is, for any $\sigma\in\Gal(\Q(X(N))/\Q(j))=\GL_{2}(\Z/N\Z)/\pm 1$ and any $\textbf{a}\in(N^{-1}\Z/\Z)^{2}$, we have
$$
u_{\rm\bf a}^{\sigma}=u_{\rm\bf a\sigma}.
$$

Here we borrow a result and its proof from \cite{BaBi11} for subsequent applications and the conveniences of readers.
\begin{proposition}[\cite{BaBi11}]\label{u1}
We have
\begin{equation}
\prod\limits_{\textbf{a}\in\mathcal{A}_{N}}u_{\textbf{a}}=\pm\Phi_{N}(1)^{12N}
=\left\{ \begin{array}{ll}

                 \pm \ell^{12N} & \textrm{ if $N$ is a power of a prime $\ell$},\\

                 \pm 1 & \textrm{ if $N$ has at least two distinct prime factors},
                 \end{array} \right.
\notag
\end{equation}
where $\Phi_{N}$ is the $N$-th cyclotomic polynomial.
\end{proposition}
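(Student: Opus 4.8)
The plan is to use the product expansion of the Siegel functions to evaluate $\prod_{\mathbf a \in \mathcal A_N} g_{\mathbf a}$ (up to twelfth powers, which is all we need for $u_{\mathbf a}=g_{\mathbf a}^{12N}$), and to recognize the resulting constant as a value of a cyclotomic polynomial. First I would recall the infinite product
$$
g_{\mathbf a}(q_\tau) = -q_\tau^{B_2(a_1)/2} e^{\pi i a_2(a_1-1)} \prod_{n=0}^{\infty}(1-q_\tau^{n+a_1}e^{2\pi i a_2})(1-q_\tau^{n+1-a_1}e^{-2\pi i a_2}).
$$
Since $\prod_{\mathbf a\in\mathcal A_N} u_{\mathbf a}$ is a modular unit on $X(N)$ that is Galois-stable (the set $\mathcal A_N$ is permuted by $\GL_2(\Z/N\Z)$), it is in fact a nonzero constant, so it suffices to compute its value, e.g.\ by taking the limit $\tau\to i\infty$, i.e.\ $q_\tau\to 0$. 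I would first check that the total order of vanishing $12N\sum_{\mathbf a\in\mathcal A_N}\ell_{\mathbf a}$ is zero; this follows from the symmetry $\mathbf a\mapsto -\mathbf a$ on $\mathcal A_N$ together with $B_2(a_1)+B_2(1-a_1)=2B_2(a_1)$ being balanced by the standard distribution relations for $B_2$, or more simply from the fact that the product is already known to be a constant (a modular unit with no zeros or poles away from the cusps whose divisor has degree zero and is Galois-invariant).

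Next, taking $q_\tau\to 0$ kills every factor $(1-q_\tau^{n+a_1}e^{2\pi i a_2})$ with $n+a_1>0$ and every $(1-q_\tau^{n+1-a_1}e^{-2\pi i a_2})$ with $n+1-a_1>0$; the only surviving nontrivial factor occurs when $a_1=0$, giving $(1-e^{2\pi i a_2})$. Hence $\lim_{q_\tau\to 0} q_\tau^{-\ell_{\mathbf a}}g_{\mathbf a} = -\gamma_{\mathbf a}$ with $\gamma_{\mathbf a}$ exactly as in Proposition~\ref{ga1}. Therefore
$$
\prod_{\mathbf a\in\mathcal A_N} u_{\mathbf a} = \pm\prod_{\mathbf a\in\mathcal A_N}\gamma_{\mathbf a}^{12N}.
$$
For $\mathbf a=(a_1,a_2)$ with $a_1\neq 0$ we have $\gamma_{\mathbf a}=e^{\pi i a_2(a_1-1)}$, a root of unity; pairing $\mathbf a=(a_1,a_2)$ with $(a_1,-a_2)$ (both in $\mathcal A_N$ when $a_1\neq 0$, including the fixed points $a_2\in\{0,1/2\}$ handled separately) shows these contributions cancel to $\pm1$ after raising to the $12N$. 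For $a_1=0$ the element $(0,a_2)$ lies in $\mathcal A_N$ precisely when $a_2$ has exact order $N$, i.e.\ $a_2=b/N$ with $\gcd(b,N)=1$, and then $\gamma_{\mathbf a}=e^{-\pi i b/N}(1-e^{2\pi i b/N})$. So
$$
\prod_{a_1=0,\ \mathbf a\in\mathcal A_N}\gamma_{\mathbf a} = \pm\prod_{\gcd(b,N)=1}(1-\zeta_N^{b}) = \pm\Phi_N(1),
$$
using $\prod_{\gcd(b,N)=1}(x-\zeta_N^b)=\Phi_N(x)$ evaluated at $x=1$ (the prefactor $\prod e^{-\pi i b/N}$ is again a root of unity that disappears after the $12N$-th power).

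Finally I would invoke the classical evaluation $\Phi_N(1)=\ell$ if $N$ is a power of a prime $\ell$, and $\Phi_N(1)=1$ if $N$ has at least two distinct prime factors (which follows from $\prod_{d\mid N,\,d>1}\Phi_d(1)=N$ by Möbius inversion, or directly from $x^N-1=\prod_{d\mid N}\Phi_d(x)$). Combining, $\prod_{\mathbf a\in\mathcal A_N}u_{\mathbf a}=\pm\Phi_N(1)^{12N}$, which equals $\pm\ell^{12N}$ or $\pm1$ in the two cases. The main obstacle is the careful bookkeeping of which elements $(a_1,a_2)$ with $a_1=0$ actually belong to $\mathcal A_N$ (the exact-order-$N$ condition), and checking that all the root-of-unity prefactors $e^{\pi i a_2(a_1-1)}$ genuinely cancel modulo $12N$-th powers rather than just modulo something smaller; this is where the chosen normalization $0\le a_1,a_2<1$ and the pairing $\mathbf a\leftrightarrow(a_1,-a_2)$ must be handled with care, especially at the two-torsion values of $a_2$.
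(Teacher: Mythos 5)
Your proposal is correct and takes essentially the same approach as the paper: establish via $\GL_2(\Z/N\Z)$-invariance that $u=\prod_{\mathbf a\in\mathcal A_N}u_{\mathbf a}$ is a nonzero integer constant (a modular unit descending to the one-cusp curve $X(1)$), then evaluate it as $q\to 0$ in the Siegel-function product to recognize $\pm\Phi_N(1)^{12N}$. The only small difference is that the paper avoids your explicit pairing/bookkeeping for the root-of-unity prefactors by first noting that $u$ is Galois-stable and integral over $\Z[j]$, hence $u\in\Z$, so the accumulated root-of-unity factor is automatically $\pm1$.
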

\begin{proof}
We denote by $u$ the left-hand side of the equality.
Since the set $\mathcal{A}_{N}$ is stable with respect to $\GL_{2}(\Z/N\Z)$, $u$
is stable with respect to the Galois action over the field $\Q(X(1))=\Q(j)$. So $u\in \Q(j)$.
Moreover, since $u$ is integral over $\Z[j]$, $u\in \Z[j]$. Notice that
$X(1)$ has only one cusp and
$u$ has no zeros and poles outside the cusps,
so we must have $u$ is a constant and $u\in\Z$.

Furthermore, we have
\begin{align*}
u&=\prod\limits_{(a_{1},a_{2})\in\mathcal{A}_{N}}q^{6NB_{2}(a_{1})}e^{12N\pi ia_{2}(a_{1}-1)}\prod\limits_{n=0}^{\infty}(1-q^{n+a_{1}}e^{2\pi ia_{2}})^{12N}
(1-q^{n+1-a_{1}}e^{-2\pi ia_{2}})^{12N}\\
&\stackrel{q=0}{\mathrel{=\!=\!=}}\pm \prod\limits_{\substack{(a_{1},a_{2})\in\mathcal{A}_{N}\\a_{1}=0}}(1-e^{2\pi ia_{2}})^{12N}\\
&=\pm\prod\limits_{\substack{1\le k<N\\ \gcd(k,N)=1}}(1-e^{2k\pi i/N})^{12N}\\
&=\pm\Phi_{N}(1)^{12N}.
\end{align*}
\end{proof}

\subsection{$X_{G}$ and $X_{G_{1}}$} \label{X_{G_{1}}}

Let $G_{1}=G\cap\SL_{2}(\Z/N\Z)$ and $X_{G_{1}}$ be the modular curve corresponding to $G_{1}$.
In this subsection, we assume that $X_{G_{1}}$ is defined over a number field $K$. Then $X_{G}$ is also defined over $K$.
Since $X_{G}$ and $X_{G_{1}}$ have the same geometrically integral model,
every $K$-rational point of $X_{G}$ is also a $K$-rational point of $X_{G_{1}}$.

For each cusp $c$ of $X_{G_{1}}$, let $t_{c}$ be its local parameter constructed in \cite[Section 3]{BP10}.
Put $q_{c}=t_{c}^{e_{c}}$, where $e_{c}$ is the ramification index of the natural covering $X_{G_{1}}\to X(1)$ at $c$. Notice that $e_{c}|N$.
Furthermore, for any $v\in M_{K}$, let $\Omega_{c,v}$ be the set constructed in \cite[Section 3]{BP10}
on which $t_{c}$ and $q_{c}$ are defined and analytic. Here we quote \cite[Proposition 3.1]{BP10} as following.
\begin{proposition}[\cite{BP10}]\label{nearby}
Put
\begin{equation}
X_{G_{1}}(\bar{K}_{v})^{+}
=\left\{ \begin{array}{ll}

                 \{P\in X_{G_{1}}(\bar{K}_{v}): |j(P)|_{v}>3500\} & \textrm{ if $v \in M_{K}^{\infty}$},\\

                 \{P\in X_{G_{1}}(\bar{K}_{v}): |j(P)|_{v}>1\} & \textrm{ if $v \in M_{K}^{0}$}.
                 \end{array} \right.
\notag
\end{equation}
Then
$$
X_{G_{1}}(\bar{K}_{v})^{+}\subseteq \bigcup\limits_{c}\Omega_{c,v}
$$
with equality for the non-Archimedean $v$, where the union runs through all the cusps of $X_{G_{1}}$.
Moreover, for $P\in \Omega_{c,v}$ we have
\begin{equation}\label{nearby2}
\frac{1}{2}|j(P)|_{v}\le |q_{c}(P)^{-1}|_{v}\le \frac{3}{2}|j(P)|_{v}
\end{equation}
if $v$ is Archimedean, and $|j(P)|_{v}=|q_{c}(P)^{-1}|_{v}$ if $v$ is non-Archimedean.
\end{proposition}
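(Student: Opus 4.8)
The plan is to reduce the statement to the Fourier expansion of $j$ at a cusp together with the elementary geometry of the covering $X_{G_{1}}\to X(1)=\mathbb{P}^{1}_{j}$ near $j=\infty$. For a cusp $c$ of $X_{G_{1}}$ fix $\sigma_{c}\in\SL_{2}(\Z)$ with $\sigma_{c}(i\infty)=c$; the width $e_{c}$ is the least $h\ge 1$ such that the conjugate by $\sigma_{c}$ of the translation $\tau\mapsto\tau+h$ lies in $\Gamma$, and since $\Gamma$ contains the principal congruence subgroup of level $N$ one has $e_{c}\mid N$. Analytically the local parameter $t_{c}$ of \cite[Section~3]{BP10} is $e^{2\pi i\sigma_{c}^{-1}\tau/e_{c}}$, so that $q_{c}=t_{c}^{e_{c}}$ is exactly the pullback of the uniformizer $q=e^{2\pi i\tau}$ of $X(1)$ at $j=\infty$, while $\Omega_{c,v}$ is the connected component around $c$ of the preimage of the disk $\{|j|_{v}>t\}$, with $t=3500$ for $v\mid\infty$ and $t=1$ otherwise. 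Writing the $q$-expansion of $j$ as $\sum_{n\ge -1}j_{n}q^{n}$ with $j_{-1}=1$, $j_{0}=744$ and $0<j_{n}\le e^{C\sqrt{n}}$ for $n\ge 1$ (the classical growth estimate for the coefficients of $j$), the $\SL_{2}(\Z)$-invariance of $j$ yields the single identity
$$
q_{c}\cdot j=1+\sum_{n\ge 0}j_{n}\,q_{c}^{\,n+1},
$$
valid uniformly in $c$; only this identity and the stated bounds on the $j_{n}$ will be used.

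For a non-Archimedean $v$ the argument is purely ultrametric. On $\Omega_{c,v}$ one has $|q_{c}(P)|_{v}<1$, and since all $j_{n}$ are rational integers,
$$
\Bigl|\sum_{n\ge 0}j_{n}\,q_{c}(P)^{n+1}\Bigr|_{v}\le|q_{c}(P)|_{v}<1,
$$
so the strong triangle inequality forces $|q_{c}(P)\,j(P)|_{v}=1$, that is $|j(P)|_{v}=|q_{c}(P)^{-1}|_{v}$. Conversely, if $|j(P)|_{v}>1$ then $P$ lies in the preimage of the residue disk of $\mathbb{P}^{1}_{j}$ at $j=\infty$, which decomposes as the disjoint union of the residue disks of $X_{G_{1}}$ about the cusps; matching these with the $\Omega_{c,v}$ (a finite cover of a disk branched only at its centre is again a disk) gives the asserted equality $X_{G_{1}}(\bar{K}_{v})^{+}=\bigcup_{c}\Omega_{c,v}$.

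For an Archimedean $v$ one first uses the behaviour of $j$ on the standard fundamental domain of $\SL_{2}(\Z)$: a lift $\tau$ of a point $P$ with $|j(\tau)|_{v}>3500$ can be moved by $\Gamma$ into a horoball neighbourhood of some cusp $c$, so that $P\in\Omega_{c,v}$ and $|q_{c}(P)|_{v}=e^{-2\pi\,\mathrm{Im}(\sigma_{c}^{-1}\tau)}\le\varepsilon$ for an explicit $\varepsilon<1$. The bounds on the $j_{n}$ then give $\bigl|\sum_{n\ge 0}j_{n}q_{c}(P)^{n+1}\bigr|_{v}\le 744\,\varepsilon+\sum_{n\ge 1}e^{C\sqrt{n}}\varepsilon^{n+1}\le\tfrac13$, whence $|q_{c}(P)\,j(P)|_{v}\in[\tfrac23,\tfrac43]$ and, after rearranging, $\tfrac12|j(P)|_{v}\le|q_{c}(P)^{-1}|_{v}\le\tfrac32|j(P)|_{v}$. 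The inclusion $X_{G_{1}}(\bar{K}_{v})^{+}\subseteq\bigcup_{c}\Omega_{c,v}$ is again the horoball statement, and equality may genuinely fail here because the Archimedean ``disk'' $\{|j|_{v}>3500\}$ need not pull back to a disjoint union.

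The delicate point --- and the reason \cite{BP10} devotes a whole section to it --- is the uniform construction of the parameters $t_{c}$ and of the neighbourhoods $\Omega_{c,v}$ over all places simultaneously, together with the verification that the thresholds $3500$ and $1$ are at once large enough for the covering assertions above and small enough for the power-series estimates; in particular one must track, for cusps not $\SL_{2}(\Z)$-equivalent to $i\infty$, how the expansion transforms under $\sigma_{c}$, and keep apart the crude non-Archimedean bound $|j_{n}|_{v}\le 1$ from the genuine growth estimate $j_{n}\le e^{C\sqrt{n}}$ needed in the Archimedean case. Since all of this is carried out in \cite[Proposition~3.1]{BP10}, we simply invoke it here.
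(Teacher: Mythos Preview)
The paper does not prove this proposition at all: it is stated with attribution to \cite{BP10} and introduced by ``Here we quote \cite[Proposition 3.1]{BP10} as following,'' with no proof given. Your proposal ultimately does the same thing---you close by invoking \cite[Proposition~3.1]{BP10}---so in that sense you are aligned with the paper.

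The sketch you provide before the final citation is a reasonable and correct outline of the argument in \cite{BP10}: the reduction to the $q$-expansion $qj=1+\sum_{n\ge0}j_nq^{n+1}$, the ultrametric one-line proof in the non-Archimedean case, and the horoball-plus-coefficient-estimate argument in the Archimedean case. This is strictly more than the paper offers, and it is helpful exposition. One small caution: your description of $\Omega_{c,v}$ as ``the connected component around $c$ of the preimage of the disk $\{|j|_v>t\}$'' is morally right but not literally how \cite{BP10} defines it (they build $\Omega_{c,v}$ directly from the local parameter $t_c$), and the precise matching of the two descriptions is part of what \cite[Section~3]{BP10} establishes. Since you defer to that reference anyway, this is not a gap.
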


We will use the above proposition several times without special reference. Moreover, this proposition implies that
for every $P\in X_{G_{1}}(\bar{K}_{v})^{+}$ there exists a cusp $c$ such that $P\in\Omega_{c,v}$. We call $c$ a $v$-nearby cusp of $P$.

We get directly the following corollary from Proposition \ref{ga1}.
\begin{corollary}\label{ga2}
Let $c$ be a cusp of $X_{G_{1}}$, $v\in M_{K}$ and $P\in \Omega_{c,v}$. Assume that $|q_{c}(P)|_{v}\le 10^{-N}$.
For ${\rm\bf a}\in \mathcal{A}_{N}$, we have
\begin{equation}
-q_{c}^{-\ell_{\rm\bf a}}\gamma_{{\rm\bf a}}^{-1}g_{{\rm\bf a}}(q_{c}(P))=1+O_{v}(4|q_{c}(P)|_{v}^{1/N}).
\notag
\end{equation}
\end{corollary}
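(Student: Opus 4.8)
The plan is to deduce Corollary \ref{ga2} directly from the expansion in Proposition \ref{ga1} by bounding the tail of the power series. First I would apply Proposition \ref{ga1} with $q=q_c(P)\in\bar K_v$, which is legitimate since $P\in\Omega_{c,v}$ forces $|q_c(P)|_v<1$ (indeed $|q_c(P)|_v\le 10^{-N}<1$ by hypothesis). This yields
\begin{equation}
-q_c^{-\ell_{\rm\bf a}}\gamma_{\rm\bf a}^{-1}g_{\rm\bf a}(q_c(P))=1+\sum_{k=1}^{\infty}\phi_{\rm\bf a}(k)\,q_c(P)^{k/N},
\notag
\end{equation}
so it remains to show that the sum on the right is $O_v\bigl(4|q_c(P)|_v^{1/N}\bigr)$, i.e. that its $v$-absolute value is at most $4|q_c(P)|_v^{1/N}$.

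Write $x=|q_c(P)|_v^{1/N}$, so that $x\le 10^{-1}$ by the assumption $|q_c(P)|_v\le 10^{-N}$. Using the ultrametric inequality in the non-Archimedean case and the triangle inequality in the Archimedean case, together with the bound $|\phi_{\rm\bf a}(k)|_v\le e^{k}$ from Proposition \ref{ga1}, I would estimate
\begin{equation}
\left|\sum_{k=1}^{\infty}\phi_{\rm\bf a}(k)\,q_c(P)^{k/N}\right|_v\le\sum_{k=1}^{\infty}e^{k}x^{k}=\frac{ex}{1-ex},
\notag
\end{equation}
which converges since $ex\le e/10<1$. (In the non-Archimedean case one in fact gets the even smaller bound $\max_k|e^k x^k|_v = |ex|_v$, but the Archimedean estimate is the binding one and covers both.) Since $ex\le e/10$, we have $1-ex\ge 1-e/10$, hence $ex/(1-ex)\le e x/(1-e/10)=\bigl(e/(1-e/10)\bigr)x< 4x=4|q_c(P)|_v^{1/N}$, because $e/(1-e/10)=10e/(10-e)<4$ (as $10e\approx 27.18<4(10-e)\approx 29.13$). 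This gives exactly the claimed $O_v$-bound.

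There is no real obstacle here — the corollary is essentially bookkeeping on top of Proposition \ref{ga1} — but the one point requiring a little care is making sure the constant truly comes out to be $4$ and not something larger: this is why the hypothesis is $|q_c(P)|_v\le 10^{-N}$ rather than merely $|q_c(P)|_v<1$, since a cruder bound on $x$ would inflate the geometric-series constant past $4$. I would also remark that the series manipulations are valid $v$-adically in both the Archimedean and non-Archimedean settings because absolute convergence of $\sum e^k x^k$ guarantees convergence (and rearrangement validity) of $\sum\phi_{\rm\bf a}(k)q_c(P)^{k/N}$ in $\bar K_v$, which is complete; this is the same convergence that underlies the statement of Proposition \ref{ga1} itself.
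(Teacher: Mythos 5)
Your argument is correct and is exactly the routine estimate the paper has in mind: the paper offers no proof, stating only that the corollary follows ``directly'' from Proposition \ref{ga1}, and your bookkeeping — summing the geometric series $\sum_{k\ge 1}e^k x^k=\frac{ex}{1-ex}$ with $x=|q_c(P)|_v^{1/N}\le 10^{-1}$, and observing $\frac{10e}{10-e}<4$ — is precisely how one sees that the constant $4$ works. Your side remark that the non-Archimedean case yields the sharper bound $ex$ via the ultrametric inequality is also correct and consistent with the claimed estimate.
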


The following proposition follows directly from \cite[Propositions 2.3 and 2.5]{BP10}.
\begin{proposition}\label{log1}
Let $c$ be a cusp of $X_{G_{1}}$, $v\in M_{K}$ and $P\in \Omega_{c,v}$. For every ${\rm\bf a}\in \mathcal{A}_{N}$,  we have
\begin{equation}
\left|\log|g_{{\rm\bf a}}(q_{c}(P))|_{v}-\ell_{\rm\bf a}\log|q_{c}(P)|_{v}\right|
\left\{ \begin{array}{ll}

                 \le \log N & \textrm{ if $v\in M_{K}^{\infty}$},\\

                 =0 & \textrm{ if $|N|_{v}=1$},\\

                 \le \frac{\log \ell}{\ell-1} & \textrm{ if $v|\ell|N$},
                 \end{array} \right.
\notag
\end{equation}
where $\ell$ is some prime factor of $N$.
\end{proposition}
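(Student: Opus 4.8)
The plan is to obtain Proposition~\ref{log1} by transcribing \cite[Propositions~2.3 and~2.5]{BP10} into the present notation. First I would record, for any $q\in\bar{K}_{v}$ with $|q|_{v}<1$ and any fixed $(12N^{2})$-th root $q^{1/(12N^{2})}$, the factorization read off from the infinite product for $g_{\mathbf a}$:
$$
g_{\mathbf a}(q)=-\gamma_{\mathbf a}\,q^{\ell_{\mathbf a}}\prod_{n\ge 0}\bigl(1-q^{n+a_{1}}e^{2\pi ia_{2}}\bigr)\bigl(1-q^{n+1-a_{1}}e^{-2\pi ia_{2}}\bigr),
$$
where the $n=0$ factor of the first type is deleted when $a_{1}=0$, since it has been absorbed into $\gamma_{\mathbf a}$ (cf.\ Proposition~\ref{ga1}). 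Taking $v$-absolute values at $q=q_{c}(P)$ — legitimate because $|q_{c}(P)|_{v}<1$ by Proposition~\ref{nearby} and the construction of $\Omega_{c,v}$, and independent of the chosen root since only $|\cdot|_{v}$ enters — the quantity to be estimated becomes
$$
\log|\gamma_{\mathbf a}|_{v}+\sum_{n\ge 0}\Bigl(\log\bigl|1-q_{c}(P)^{n+a_{1}}e^{2\pi ia_{2}}\bigr|_{v}+\log\bigl|1-q_{c}(P)^{n+1-a_{1}}e^{-2\pi ia_{2}}\bigr|_{v}\Bigr),
$$
so everything reduces to bounding this expression.

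For $v$ non-Archimedean it drops out at once, which is the content of \cite[Proposition~2.5]{BP10}. Since $|q_{c}(P)|_{v}<1$ and every surviving exponent $n+a_{1}$, $n+1-a_{1}$ is strictly positive while $e^{\pm 2\pi ia_{2}}$ is a $v$-unit, the strict ultrametric inequality makes each factor equal to $1$ in absolute value, so the sum vanishes; and for $a_{1}\ne 0$ one has $\gamma_{\mathbf a}=e^{\pi ia_{2}(a_{1}-1)}$, a root of unity, so $|\gamma_{\mathbf a}|_{v}=1$ and the whole expression is $0$. For $a_{1}=0$, the condition $\mathbf a\in\mathcal A_{N}$ forces $a_{2}$ to have exact order $N$, so $|\gamma_{\mathbf a}|_{v}=|1-e^{2\pi ia_{2}}|_{v}=|1-\zeta_{N}^{k}|_{v}$ with $\gcd(k,N)=1$; this equals $1$ when $v\nmid N$, and when $v\mid\ell\mid N$ it is again $1$ unless $N$ is a power of $\ell$, in which case it equals $\ell^{-1/\varphi(N)}$ — the very computation underlying Proposition~\ref{u1}, through $\prod_{\gcd(k,N)=1}(1-\zeta_{N}^{k})=\Phi_{N}(1)$ — so that $\bigl|\log|\gamma_{\mathbf a}|_{v}\bigr|\le\frac{\log\ell}{\varphi(N)}\le\frac{\log\ell}{\ell-1}$.

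For $v$ Archimedean, $\gamma_{\mathbf a}$ and the factors $e^{\pm 2\pi ia_{2}}$ all have absolute value $1$, so only the tail $\sum_{n\ge 0}\bigl(\log|1-q_{c}(P)^{n+a_{1}}e^{2\pi ia_{2}}|_{v}+\log|1-q_{c}(P)^{n+1-a_{1}}e^{-2\pi ia_{2}}|_{v}\bigr)$ survives, and bounding it by $\log N$ is exactly \cite[Proposition~2.3]{BP10}, applied with its variable taken to be $q_{c}(P)$. I expect the one place needing care — modest as it is, the main obstacle — to be checking that $P\in\Omega_{c,v}$ does put $q_{c}(P)$ into the range where that proposition yields the clean constant $\log N$: by Proposition~\ref{nearby} the neighbourhood $\Omega_{c,v}$ lies over $\{|j|_{v}>3500\}$ with $|q_{c}(P)^{-1}|_{v}\ge\frac12|j(P)|_{v}$, so $|q_{c}(P)|_{v}$ is comfortably below any threshold needed, the $\log N$ in the bound being produced by the single factor with smallest exponent $a_{1}\ge 1/N$ — whose modulus $|q_{c}(P)|_{v}^{a_{1}}$ stays bounded away from $1$ — while every other summand has absolute value $\ll|q_{c}(P)|_{v}^{1+1/N}$ and is negligible. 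Apart from this, and from matching our normalization of $\mathbf a$ (representatives with $0\le a_{1},a_{2}<1$, whence $\ell_{\mathbf a}=B_{2}(a_{1})/2$) to the conventions of \cite{BP10}, there is nothing further to do.
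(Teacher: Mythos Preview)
Your approach is exactly the paper's: the proposition is stated there as following directly from \cite[Propositions~2.3 and~2.5]{BP10}, with no further argument, and you are simply unpacking those citations. One small inaccuracy in your Archimedean sketch is worth flagging: when $a_{1}=0$ you do \emph{not} have $|\gamma_{\mathbf a}|_{v}=1$, since $\gamma_{\mathbf a}=e^{-\pi ia_{2}}(1-e^{2\pi ia_{2}})$ and $|1-e^{2\pi ia_{2}}|_{v}=2|\sin(\pi a_{2})|$ can be as small as $2\sin(\pi/N)$; in that case it is precisely this factor --- not a tail term with exponent $a_{1}\ge 1/N$ --- that produces the $\log N$. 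This does not affect your conclusion, because \cite[Proposition~2.3]{BP10} bounds the full quantity $\bigl|\log|g_{\mathbf a}(q)|_{v}-\ell_{\mathbf a}\log|q|_{v}\bigr|$ directly and already absorbs this contribution.
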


\subsection{Modular units on $X_{G_1}$}

We apply the notations in the above subsection.

We denote by $\mathcal{M}_{N}$ the set of elements of exact order $N$ in $(\Z/N\Z)^{2}$.
Let us consider the natural right group action of $G_{1}$ on $\mathcal{M}_{N}$. 
Following the proof of \cite[Lemma 2.3]{BI11}, we see that the number of the orbits of $\mathcal{M}_{N}/G_{1}$ is
equal to $\nu_{\infty}(G)$.

Obviously, 
 when we consider the natural right group action $\mathcal{A}_{N}/G_{1}$,
 there are also $\nu_{\infty}(G)$ orbits of this group action. So
 $$
 \nu_{\infty}(G)\le|\mathcal{A}_{N}|<N^{2}.
 $$

Let $T$ be any subset of $\mathcal{A}_{N}$, we define
$$
u_{T}=\prod\limits_{\textbf{a}\in T}u_{\textbf{a}}.
$$
Let $\OO$ be an orbit of the right group action $\mathcal{A}_{N}/G_{1}$, we have
\begin{equation}\label{uo}
u_{\OO}=\prod\limits_{\textbf{a}\in\OO}u_{\textbf{a}}.
\end{equation}
By \cite[Proposition 4.2 (ii)]{BP10}, $u_{\OO}$ is a rational function on the modular curve $X_{G_{1}}$.
In fact, $u_{\OO}$ is a modular unit on $X_{G_{1}}$.

For any cusp $c$, we denote by $\ord_{c}(u_{\OO})$ the vanishing order of $u_{\OO}$ at $c$.
For $v\in M_{K}$, define
\begin{equation}
\rho_{v}=
\left\{ \begin{array}{ll}

                  12N^{3}\log N & \textrm{ if $v \in M_{K}^{\infty}$},\\

                 0 & \textrm{ if $v \in M_{K}^{0}$ and $|N|_{v}=1$},\\

                  \frac{12N^{3}\log \ell}{\ell-1} & \textrm{ if $v \in M_{K}^{0}$ and $v|\ell|N$},
                 \end{array} \right.
\notag
\end{equation}
where $\ell$ is some prime factor of $N$.

Then $u_{\OO}$ has the following properties.
\begin{proposition} \label{munit}
{\rm (i)} Put $\lambda=(1-\zeta_{N})^{12N^{3}}$. Then the functions $u_{\OO}$ and $\lambda u_{\OO}^{-1}$ are integral
over $\Z[j]$.

{\rm (ii)} For the cusp $c_{\infty}$ at infinity, we have
$$
\ord_{c_\infty}(u_{\OO})=12Ne_{c_\infty}\sum\limits_{{\rm\bf a}\in \OO}\ell_{{\rm\bf a}}.
$$
For any cusp $c$, we have $|\ord_{c}(u_{\OO})|<N^{4}$.

{\rm (iii)} Let $c$ be a cusp of $X_{G_{1}}$, $v\in M_{K}$ and $P\in \Omega_{c,v}$. Assume that $|q_{c}(P)|_{v}\le 10^{-N}$.
Then we have
\begin{equation}
q_{c}(P)^{-\ord_{c}(u_{\OO})/e_{c}}\gamma_{\OO,c}^{-1}u_{\OO}(P)=1+O_{v}(4^{12N^{3}}|q_{c}(P)|_{v}^{1/N}),
\notag
\end{equation}
where $\gamma_{\OO,c}\in\Q(\zeta_{N})$ and $\h(\gamma_{\OO,c})\le 12N^{3}\log 2$.

{\rm (iv)} Let $c$ be a cusp of $X_{G_{1}}$ and $v\in M_{K}$. For $P\in \Omega_{c,v}$, we have
$$
\left|\log|u_{\OO}(P)|_{v}-\frac{\ord_{c}(u_{\OO})}{e_{c}}\log|q_{c}(P)|_{v}\right|\le\rho_{v}.
$$

{\rm (v)} For $v\in M_{K}^{\infty}$ and $P\in X_{G_{1}}(K_{v})$, we have
$$
\left|\log|u_{\OO}(P)|_{v}\right|
\le N^{3}\log(|j(P)|_{v}+2400)+\rho_{v}.
$$

{\rm (vi)} The group generated by the principal divisor $(u_{\OO})$, where $\OO$ runs over the orbits of $\mathcal{A}_{N}/G_{1}$,
is of rank $\nu_{\infty}(G)-1$.
\end{proposition}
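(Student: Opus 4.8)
The plan is to identify the group generated by the principal divisors $(u_{\OO})$, as $\OO$ ranges over the orbits of $\mathcal{A}_{N}/G_{1}$, with a concrete sublattice of the divisor group supported on the cusps, and then to compute its rank via a rank-nullity argument. Write $Z = \bigoplus_{c} \Z \cdot (c)$ for the free abelian group on the cusps of $X_{G_{1}}$, so that $\nu_{\infty}(G) = \mathrm{rk}\, Z$. Each $(u_{\OO})$ lies in the degree-zero subgroup $Z^{0}$ of $Z$, because $u_{\OO}$ is a rational function on $X_{G_{1}}$ (by the discussion preceding \eqref{uo}) and hence its divisor has degree zero. Therefore the subgroup $\langle (u_{\OO}) \rangle$ has rank at most $\mathrm{rk}\, Z^{0} = \nu_{\infty}(G)-1$, giving the upper bound for free.

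For the matching lower bound I would argue that the only relations among the $(u_{\OO})$ are the multiplicative ones coming from the constancy of the total product. First, Proposition \ref{u1} shows that $\prod_{\textbf{a}\in\mathcal{A}_{N}} u_{\textbf{a}}$ is a nonzero constant, so $\sum_{\OO}(u_{\OO}) = 0$; this is one relation. I claim it is essentially the only one. Suppose $\sum_{\OO} n_{\OO}(u_{\OO}) = 0$ in $Z$ for integers $n_{\OO}$; then $f = \prod_{\OO} u_{\OO}^{n_{\OO}}$ is a rational function on $X_{G_{1}}$ with no zeros or poles, hence $f$ is constant. Now I would pass to the function-field level: on $X(N)$, the modular units $u_{\textbf{a}}$ for $\textbf{a}\in\mathcal{A}_{N}$ have known divisors supported on the cusps of $X(N)$, and a classical fact about Siegel units (from \cite{KL81}, via the Manin--Drinfeld type description of the cuspidal divisor class group, or more elementarily via the order-of-vanishing formula $\ord$ in terms of $\ell_{\textbf{a}}$ over all $\GL_{2}(\Z/N\Z)$-translates) is that the only multiplicative relations making $\prod u_{\textbf{a}}^{m_{\textbf{a}}}$ constant on $X(N)$ are the ones forcing all $m_{\textbf{a}}$ equal. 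Translating this down to $X_{G_{1}}$: the pullback of $(u_{\OO})$ to $Z_{X(N)}$ is the $G_{1}$-average of the divisors $(u_{\textbf{a}})$ over $\textbf{a}\in\OO$, and these averaged divisors inherit the property that their only relation is $\sum_{\OO}$. Hence $n_{\OO}$ is independent of $\OO$, i.e. the relation lattice is exactly $\Z\cdot(1,\dots,1)$, of rank $1$, so $\mathrm{rk}\,\langle (u_{\OO})\rangle = \#\{\text{orbits}\} - 1 = \nu_{\infty}(G)-1$ using the orbit count recalled just before \eqref{uo}.

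The main obstacle is making the lower-bound step genuinely rigorous rather than merely plausible: I need that a product $\prod u_{\textbf{a}}^{m_{\textbf{a}}}$ over $\textbf{a}\in\mathcal{A}_{N}$ is constant on $X(N)$ \emph{only} when all exponents coincide. The cleanest route is to invoke the explicit divisor computation for Siegel units: on $X(N)$ the divisor of $u_{\textbf{a}} = g_{\textbf{a}}^{12N}$ is determined, cusp by cusp, by the quantities $\ell_{\textbf{b}}$ attached to the $\GL_{2}(\Z/N\Z)$-translates $\textbf{b}$ of $\textbf{a}$, and the resulting integer matrix (rows indexed by cusps, columns by $\mathcal{A}_{N}$) has corank exactly $1$ with kernel spanned by the all-ones vector — this is precisely the content underlying \cite[Proposition 4.2]{BP10} and the structure of the cuspidal subgroup. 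Alternatively, one can sidestep the full matrix computation: since the $G_{1}$-orbit sums of these divisors descend to the well-defined divisors $(u_{\OO})$ on $X_{G_{1}}$, and since any nontrivial relation among them would descend from a $\GL_{2}(\Z/N\Z)$-equivariant relation upstairs, equivariance alone forces the relation to be a scalar multiple of the total-product relation of Proposition \ref{u1}. Either way, combining the upper bound from $\deg = 0$ with this rank-$1$ relation lattice yields rank exactly $\nu_{\infty}(G)-1$.
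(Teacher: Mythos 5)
Your skeleton is the same as the paper's: an upper bound of $\nu_{\infty}(G)-1$ (yours via the degree-zero constraint on cuspidal divisors, the paper's via the single relation $\sum_{\OO}(u_{\OO})=0$ furnished by Proposition \ref{u1} — both fine), followed by a matching lower bound that ultimately rests on the Manin--Drinfeld theorem as stated in \cite{KL81}. The paper stops there with the citation. Your attempts to make the lower bound more explicit are where the problems appear.

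The "corank exactly $1$ with kernel spanned by the all-ones vector" claim for the divisor matrix of the $u_{\textbf{a}}$, $\textbf{a}\in\mathcal{A}_{N}$, on $X(N)$ is false as literally stated: since $g_{-\textbf{a}}=-g_{\textbf{a}}$ one has $u_{\textbf{a}}=u_{-\textbf{a}}$, so each pair $\{\textbf{a},-\textbf{a}\}$ already gives a relation $(u_{\textbf{a}})-(u_{-\textbf{a}})=0$ whose exponent vector is not a multiple of $(1,\ldots,1)$. One must first identify $\textbf{a}$ with $-\textbf{a}$, and even then the assertion is not what \cite[Proposition 4.2]{BP10} contains (that proposition concerns integrality over $\Z[j]$ and order-of-vanishing bounds, not the relation lattice), so the citation does not close the gap. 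The equivariance alternative is also unsound: a relation $\sum_{\OO}n_{\OO}(u_{\OO})=0$ pulls back on $X(N)$ to $\sum_{\textbf{a}}m_{\textbf{a}}(u_{\textbf{a}})=0$ with $m_{\textbf{a}}$ constant on $G_{1}$-orbits, i.e.\ a $G_{1}$-equivariant exponent vector, not a $\GL_{2}(\Z/N\Z)$-equivariant one. Since $G_{1}=G\cap\SL_{2}(\Z/N\Z)$ is in general a proper subgroup of $\GL_{2}(\Z/N\Z)$, transitivity of $\GL_{2}(\Z/N\Z)$ on $\mathcal{A}_{N}$ cannot be invoked to force all $n_{\OO}$ equal; indeed $\mathcal{A}_{N}$ splits into $\nu_{\infty}(G)\ge 3$ distinct $G_{1}$-orbits precisely because $G_{1}$ does not act transitively, which is what gives the statement content. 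You correctly flag the lower bound as the main obstacle; the fix is simply to cite the Kubert--Lang formulation of Manin--Drinfeld, as the paper does, rather than attempt the corank or equivariance shortcuts.
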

\begin{proof}
(i) See \cite[Proposition 4.2 (i)]{BP10}.

(ii) Similar to the proof of \cite[Proposition 4.2 (iii)]{BP10}. The $q$-order of vanishing of $u_{\OO}$ at $i\infty$ is
$12N\sum\limits_{{\rm\bf a}\in \OO}\ell_{{\rm\bf a}}$. Then
$$
\ord_{c_\infty}(u_{\OO})=12Ne_{c_\infty}\sum\limits_{{\rm\bf a}\in \OO}\ell_{{\rm\bf a}}.
$$
Since $|\ell_{{\rm\bf a}}|\le 1/12$, we have $|\ord_{c_\infty}(u_{\OO})|\le Ne_{c_\infty}|\OO|<N^{4}$.
The case of arbitrary $c$ reduces to the case $c=c_\infty$ by replacing $\OO$ by $\OO\sigma$ where
$\sigma\in\GL_{2}(\Z/N\Z)$ is such that $\sigma(c)=c_\infty$.

(iii) Similar to the proof of \cite[Proposition 4.4]{BP10} by using Corollary \ref{ga2} except for
the height of $\gamma_{\OO}$. In fact, if $c=c_\infty$, we have
$\gamma_{\OO,c}=\prod\limits_{\rm\bf a\in\OO}\gamma_{\rm\bf a}^{12N}$. Then
$\h(\gamma_{\OO,c})\le 12N\sum\limits_{\rm\bf a\in\OO}\h(\gamma_{\rm\bf a})\le 12N|\OO|\log 2<12N^{3}\log 2$.
The general case reduces to the case $c=c_\infty$ by applying a suitable Galois automorphism.

(iv) and (v) They follow from \cite[Proposition 4.4]{BP10}.

(vi) By Proposition \ref{u1}, the rank of the free abelian group $(u_{\OO})$ is at most $\nu_{\infty}(G)-1$. Then
Manin-Drinfeld theorem, as stated in \cite{KL81}, tells us that this rank is maximal possible.
\end{proof}

\section{Siegel's theory of convenient units}
We recall here Siegel's construction \cite{Si69} of convenient units in a number field $K$ of degree $d$, in the form adapted
to the needs of the present paper.
The results of this section are well-known, but not always in the set-up we wish them to have.

Let $S$ be a finite set of absolute values of $K$,
containing all the Archimedean valuations and normalized with respect to $\Q$.
Fix a valuation $v_{0}\in S$, we put
$$
S^{\prime}=S\setminus \{v_{0}\},\quad s=|S|\ge 2, \quad r=s-1,\quad d^{\prime}=\max\{d,3\},\quad \zeta=1201\left(\frac{\log d^{\prime}}{\log\log d^{\prime}}\right)^{3}.
$$
Let $\xi_{1},\cdots,\xi_{r}$ be a fundamental system of $S$-units. The $S$-regulator $R(S)$ is the absolute value of
the determinant of the $r\times r$ matrix
\begin{equation}\label{regulator}
(d_{v}\log|\xi_{k}|_{v})_{\substack{v\in S^{\prime}\\1\le k\le r}}
\end{equation}
(we fix some ordering for the set $S^{\prime}$), where $d_{v}=[K_{v}:\Q_{v}]$ is the local degree of $v$.
It is well-defined and is equal to the usual regulator $R_{K}$ when $S$ is the set of infinite places.

\begin{proposition}\label{S-unit}
There exists a fundamental system of $S$-units $\eta_{1},\cdots,\eta_{r}$ satisfying
\begin{align*}
&\h(\eta_{1})\cdots\h(\eta_{r})\le d^{-r}r^{2r}R(S),\\
&(\zeta d)^{-1}\le \h(\eta_{k})\le d^{-1}r^{2r}\zeta^{r-1}R(S)\quad (k=1,\cdots,r).
\end{align*}
Furthermore, the entries of the inverse matrix of (\ref{regulator}) are bounded in absolute value by $r^{2r}\zeta$.
\end{proposition}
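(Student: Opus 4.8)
The plan is to deduce the statement from a standard box/pigeonhole argument applied to the logarithmic embedding of the $S$-units, using the quantitative bound on the regulator of a sublattice due to the hypothesis on $\zeta$, which ultimately goes back to the work of Hajdu and Bugeaud--Gy\H{o}ry on effective Siegel units. First I would recall that the $S$-units modulo torsion form a lattice $\Lambda$ of rank $r$ inside the trace-zero hyperplane of $\R^{S}$ via the map $\xi\mapsto(d_{v}\log|\xi|_{v})_{v\in S}$, and that the covolume of the projection of $\Lambda$ onto the coordinates indexed by $S'$ is exactly $R(S)$. The height of an $S$-unit $\eta$ is, up to the factor $d^{-1}$ and a factor of $2$ coming from splitting into positive and negative parts, the $\ell^{1}$-norm of its image; more precisely $\h(\eta)=\tfrac{1}{2d}\sum_{v\in S}|d_{v}\log|\eta|_{v}|$, so controlling heights amounts to controlling $\ell^{1}$-norms of lattice vectors.

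Next I would invoke a Minkowski-type reduction result for the lattice $\Lambda$: there is a basis $\eta_{1},\dots,\eta_{r}$ whose successive $\ell^{1}$-norms are, in an ordered product, comparable to the covolume, with the comparison constant being polynomial in $r$; this is where the $r^{2r}$ factor enters. Concretely one takes a Minkowski-reduced basis for the Euclidean structure, converts between the $\ell^{1}$ and $\ell^{2}$ norms (losing at most a factor $\sqrt{s}\le \sqrt{2r}$ per vector), and uses Minkowski's second theorem together with the bound $\prod\lambda_{i}\ge c_{r}\,\mathrm{vol}$ to get $\prod_{k}\h(\eta_{k})\le d^{-r}r^{2r}R(S)$. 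The individual upper bound on each $\h(\eta_{k})$ then follows by combining the product bound with the lower bound $(\zeta d)^{-1}\le\h(\eta_{k})$: dividing the product by the other $r-1$ factors, each at least $(\zeta d)^{-1}$, yields $\h(\eta_{k})\le d^{-1}r^{2r}\zeta^{r-1}R(S)$. The lower bound $\h(\eta_{k})\ge(\zeta d)^{-1}$ is not automatic for an arbitrary basis; one must argue that a reduced basis cannot contain a very short vector, or alternatively replace any short $\eta_{k}$ using a lower bound for the height of any $S$-unit that is not a root of unity. This last point is exactly the role of the quantity $\zeta=1201(\log d'/\log\log d')^{3}$: it is (a form of) the known lower bound for heights of $S$-units, i.e.\ a Dobrowolski/Voutier-type estimate, and one cites the appropriate reference (e.g.\ Voutier, or the explicit version in Bugeaud--Gy\H{o}ry or Bombieri--Gubler) to justify that every non-torsion $S$-unit has height at least $(\zeta d)^{-1}$, so in particular every basis element does.

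Finally, for the statement about the inverse matrix: writing $U=(d_{v}\log|\eta_{k}|_{v})_{v\in S',\,1\le k\le r}$, the entries of $U^{-1}$ are, by Cramer's rule, cofactors of $U$ divided by $\det U=\pm R(S)$. Each cofactor is a sum of $(r-1)!$ products of $r-1$ entries, and each entry $|d_{v}\log|\eta_{k}|_{v}|$ is bounded by $2d\,\h(\eta_{k})$, hence by $2\,r^{2r}\zeta^{r-1}R(S)$ after using the height bound just proved; one then checks that $(r-1)!\cdot(2r^{2r}\zeta^{r-1}R(S))^{r-1}/R(S)\le r^{2r}\zeta\cdot(\text{something})$, and more carefully one arranges the constants so the clean bound $r^{2r}\zeta$ comes out — the key input being that the product $\h(\eta_{1})\cdots\h(\eta_{r})$, not the individual heights, controls the relevant minors, via the adjugate identity $U^{-1}=\mathrm{adj}(U)/\det U$ and the fact that a minor of $U$ is bounded (Hadamard) by a product of fewer than $r$ of the row norms. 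I expect the main obstacle to be precisely this bookkeeping: getting the numerical constants $r^{2r}$ and $\zeta^{r-1}$ to come out exactly as stated requires being careful about (a) the $\ell^{1}$ versus $\ell^{2}$ conversions and the number of places $s=r+1$, (b) the factor of $2$ from $\log^{+}$ versus $\log$, and (c) choosing which place to delete as $v_{0}$ so that the deleted-coordinate regulator is genuinely $R(S)$ and the adjugate estimate is tight. None of these steps is deep, but assembling them into the precise inequalities claimed, while correctly quoting the height lower bound that defines $\zeta$, is where the real work lies.
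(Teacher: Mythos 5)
The paper's own proof is simply a citation: the fundamental-system construction, the product bound, and the inverse-matrix estimate are quoted from Bugeaud--Gy\H{o}ry, Lemma~1, and the lower bound $(\zeta d)^{-1}\le\h(\eta_k)$ is attributed separately to Dobrowolski. Your outline instead reconstructs that lemma from geometry-of-numbers first principles --- logarithmic embedding, Minkowski-type reduction of the $S$-unit lattice, Dobrowolski's lower bound, Cramer/Hadamard for the inverse matrix --- and the deduction of the individual upper bound by dividing the product bound by $r-1$ copies of the per-element lower bound is exactly the correct step. So this is the right template for the proof that the paper leaves to a reference.

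Two things should be adjusted. First, your concern that the inequality $\h(\eta_k)\ge(\zeta d)^{-1}$ ``is not automatic for an arbitrary basis'' is unfounded: $\zeta$ is chosen so that $(\zeta d)^{-1}$ is a Dobrowolski-type lower bound for the height of \emph{any} non-torsion algebraic number of degree at most $d$, and every element of a fundamental system of $S$-units is non-torsion, so the inequality holds for any basis whatsoever, with no reduction argument needed. That is precisely why the paper can cite it as a freestanding fact. Second, and more seriously, the adjugate step does not close as sketched: Hadamard on an $(r-1)\times(r-1)$ minor using column $\ell^{1}$-norms $\le 2d\,\h(\eta_k)$, followed by the product bound and division by $|\det|=R(S)$, leaves a spurious factor $2^{r-1}$ beyond the claimed $r^{2r}\zeta$. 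Absorbing that factor, together with the $\ell^{1}$--$\ell^{2}$ and $s$-versus-$r$ losses you mention, into the stated constants is exactly where Bugeaud--Gy\H{o}ry's specific reduction does work that a generic Minkowski argument does not. As you yourself concede, the bookkeeping is the whole game here; until it is carried through, the plan is an informed sketch of the cited proof rather than a proof.
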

\begin{proof}
See \cite[Lemma 1]{Bugeaud1}. Notice that the left-hand inequality in the second inequality is a \
well-known result of Dobrowolski \cite{Dobrowolski1}.
\end{proof}

\begin{corollary}\label{eta}
For the unit $\eta=\eta_{1}^{b_{1}}\cdots\eta_{r}^{b_{r}}$, where $\eta_{1},\cdots,\eta_{r}$ are from Proposition \ref{S-unit}
and $b_{1},\cdots, b_{r}\in\Z$, put $B^{*}=\max\{|b_{1}|,\cdots,|b_{r}|\}$, then we have
\begin{align*}
&\h(\eta)\le d^{-1}r^{2r+1}\zeta^{r-1}B^{*}R(S),\\
&B^{*}\le 2dr^{2r}\zeta \h(\eta).
\end{align*}
\end{corollary}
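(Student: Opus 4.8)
The plan is to derive Corollary \ref{eta} directly from Proposition \ref{S-unit} by elementary linear algebra and the basic subadditivity and homogeneity properties of the logarithmic height. For the first inequality, I would start from $\h(\eta) = \h(\eta_1^{b_1}\cdots\eta_r^{b_r}) \le \sum_{k=1}^r |b_k|\,\h(\eta_k) \le B^* \sum_{k=1}^r \h(\eta_k)$, using $\h(xy)\le\h(x)+\h(y)$ and $\h(x^n)=|n|\h(x)$. Then I would bound $\sum_{k=1}^r \h(\eta_k)$ by $r$ times the maximal value $d^{-1}r^{2r}\zeta^{r-1}R(S)$ from the upper bound in the second line of Proposition \ref{S-unit}, yielding $\h(\eta)\le d^{-1}r^{2r+1}\zeta^{r-1}B^*R(S)$, which is exactly the claimed bound.

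For the second inequality I would recover the exponents $b_1,\dots,b_r$ from $\eta$ by inverting the regulator matrix. Writing $\log|\eta|_v = \sum_{k=1}^r b_k \log|\eta_k|_v$ for $v\in S'$, this is a linear system whose coefficient matrix is $(d_v\log|\eta_k|_v)_{v\in S',\,1\le k\le r}$ up to the local-degree factors; more precisely, $d_v\log|\eta|_v = \sum_k b_k (d_v\log|\eta_k|_v)$, so the vector $(b_k)_k$ equals the inverse of the matrix \eqref{regulator} applied to the vector $(d_v\log|\eta|_v)_{v\in S'}$. By Proposition \ref{S-unit} the entries of that inverse matrix are bounded by $r^{2r}\zeta$ in absolute value, and there are $r$ terms in each row, so $|b_k|\le r\cdot r^{2r}\zeta \cdot \max_{v\in S'} |d_v\log|\eta|_v|$. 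Finally $|d_v\log|\eta|_v| = d_v\,|\log|\eta|_v| \le d_v\log\max\{|\eta|_v,|\eta|_v^{-1}\} \le d\,(\h(\eta)+\h(\eta^{-1})) = 2d\,\h(\eta)$ since $\sum_{v} d_v\log\max\{|\eta|_v,1\} = d\,\h(\eta)$ and likewise for $\eta^{-1}$, which has the same height. Combining gives $B^* \le 2d r^{2r+1}\zeta\,\h(\eta)$; I would note that the statement's $2dr^{2r}\zeta$ is what one gets by the slightly sharper accounting (for instance absorbing the row-length factor differently, or noting $r^{2r}$ already dominates after one more careful estimate), and I would simply present whichever clean route lands on the stated constant.

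The main obstacle, such as it is, is purely bookkeeping: matching the precise power of $r$ and the factor of $d$ in the second inequality, since the naive triangle-inequality count over a row of the inverse matrix produces an extra factor of $r$ that must be shown to be absorbable (e.g.\ by using that for $r\ge 1$ we have $r\cdot r^{2r}\le$ a cleaner expression in the regime of interest, or by exploiting that at most $r$ of the $S'$-coordinates are nonzero and estimating the $\ell^\infty\to\ell^\infty$ operator norm of the inverse matrix more carefully as $r^{2r}\zeta$ rather than $r^{2r+1}\zeta$). Everything else is immediate from the two displayed inequalities of Proposition \ref{S-unit} together with the standard height inequalities $\h(xy)\le\h(x)+\h(y)$, $\h(x^n)=|n|\h(x)$, and $\h(x^{-1})=\h(x)$.
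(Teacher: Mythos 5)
Your derivation of the first inequality is exactly what the paper does (which simply cites ``standard height estimates''), and your set-up for the second inequality — inverting the regulator matrix~\eqref{regulator} and using the bound $r^{2r}\zeta$ on the entries of its inverse — is also the right idea and the paper's approach. But there is a genuine gap in the final step: you bound the row sum crudely by $r$ times the maximum of $|d_v\log|\eta|_v|$, which produces the spurious extra factor of $r$ giving $2dr^{2r+1}\zeta\,\h(\eta)$, and you then appeal to unspecified ``sharper accounting'' to remove it. That hand-wave is precisely where the content lies.

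The clean route, which is the one the paper takes, is to keep the sum over $S'$ rather than replacing it by a max: from
\begin{equation}
|b_k|\le\sum_{v\in S'}\bigl|(A^{-1})_{kv}\bigr|\,d_v|\log|\eta|_v|\le r^{2r}\zeta\sum_{v\in S'}d_v|\log|\eta|_v|
\notag
\end{equation}
(where $A$ is the matrix~\eqref{regulator}), one bounds the \emph{entire sum} at once. Since $\eta$ is an $S$-unit, $|\eta|_v=1$ for $v\notin S$, so
\begin{equation}
\sum_{v\in S'}d_v|\log|\eta|_v|\le\sum_{v\in S}d_v|\log|\eta|_v|=\sum_{v\in M_K}d_v\bigl(\log^+|\eta|_v+\log^+|\eta^{-1}|_v\bigr)=d\bigl(\h(\eta)+\h(\eta^{-1})\bigr)=2d\,\h(\eta),
\notag
\end{equation}
and $B^*\le 2dr^{2r}\zeta\,\h(\eta)$ falls out with no extra factor of $r$. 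The point you missed is that passing to the max before summing throws away the fact that the whole $\ell^1$-norm of $(d_v\log|\eta|_v)_{v\in S}$ is already controlled by $2d\,\h(\eta)$; bounding each coordinate separately and then multiplying by the number of coordinates is strictly lossier.
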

\begin{proof}
The first inequality follows from Proposition \ref{S-unit} and standard height estimates.

Write
$$
d_{v}\log |\eta|_{v}=\sum\limits_{k=1}^{r}d_{v}b_{k}\log |\eta_{k}|_{v}, \quad v\in S^{\prime}.
$$
Resolving this in terms of $b_{1},\cdots,b_{r}$ and using the final statement of Proposition \ref{S-unit},
we obtain
$$
B^{*}\le r^{2r}\zeta\sum\limits_{v\in S^{\prime}}d_{v}|\log |\eta|_{v}|\le r^{2r}\zeta\sum\limits_{v\in S}d_{v}|\log |\eta|_{v}|.
$$
Since $\eta$ is an $S$-unit,
$$
\sum\limits_{v\in S}d_{v}|\log |\eta|_{v}|=d(\h(\eta)+\h(\eta^{-1}))=2d\h(\eta).
$$
Then the corollary is proved.
\end{proof}

Finally, we quote two estimates of the $S$-regulator in terms of the usual regulator $R_{K}$,
the class number $h_{K}$, the degree $d$ and the discriminant $D$ of the field $K$.

\begin{proposition}\label{S-regulator}
We have
\begin{gather}
0.1\le R(S) \leq h_{K}R_{K} \prod\limits_{\substack{v\in S\\v\nmid\infty}} \log \NN(v),\notag\\
R(S)\ll
d^{-d} \sqrt {|D|} (\log |D|)^{d-1} \prod\limits_{\substack{v\in S\\v\nmid\infty}} \log \NN(v).
\notag
\end{gather}
\end{proposition}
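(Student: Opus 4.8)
The plan is to derive both inequalities from classical estimates on the usual regulator together with the standard comparison between the $S$-regulator $R(S)$ and the product $h_K R_K \prod_{v} \log\NN(v)$. First I would establish the lower bound $R(S)\ge 0.1$: when $S$ consists only of the infinite places this is just the classical lower bound of Friedman for $R_K$ (or the trivial bound $R_K\ge 0.2$ when $r\ge 1$, with the convention $R(S)=1$ when $r=0$), and adding finite places to $S$ can only increase the regulator because the $S$-regulator for the larger set dominates, up to the factors $\log\NN(v)\ge\log 2$, the one for the smaller set. For the upper bound $R(S)\le h_K R_K\prod_{v\nmid\infty}\log\NN(v)$, the key point is the exact formula relating the $S$-class number and $S$-regulator to $h_K R_K$: one has $h_S R(S) = h_K R_K \prod_{v\in S, v\nmid\infty}\log\NN(v)$ up to a controlled factor, and since $h_S\ge 1$ this yields the claimed inequality. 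These are all in the literature; I would cite a standard reference (e.g.\ Bombieri--Gubler or the sources used in \cite{Bugeaud1}).

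Next, for the second (effective) estimate, I would combine the just-proved bound $R(S)\le h_K R_K \prod_{v\nmid\infty}\log\NN(v)$ with a classical \emph{effective} upper bound for the product $h_K R_K$ in terms of the discriminant and degree. The relevant input is the Brauer--Siegel-type inequality, made effective via bounds on $L$-functions at $s=1$: one has
$$
h_K R_K \ll d^{-d}\sqrt{|D|}\,(\log|D|)^{d-1},
$$
with an absolute effective implied constant; this is exactly the shape recorded in work of Lenstra, or of Louboutin, and it is the form used in Bugeaud's papers. Substituting this into the first inequality immediately gives
$$
R(S)\ll d^{-d}\sqrt{|D|}\,(\log|D|)^{d-1}\prod\limits_{\substack{v\in S\\v\nmid\infty}}\log\NN(v),
$$
as required.

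I expect the main (only) real obstacle to be purely bibliographic rather than mathematical: pinning down a clean, citable statement of the effective bound on $h_K R_K$ with the precise exponent $d-1$ on $\log|D|$ and an absolute implied constant, and making sure the normalization of the regulator there matches the one used here (absolute values normalized with respect to $\Q$, local degrees $d_v$ inserted in \eqref{regulator}). One should also double-check the degenerate cases $r=0$ (i.e.\ $K=\Q$ and $S$ the infinite place, where $R(S)=1$ by convention) and small $d$, so that the stated inequalities, in particular $R(S)\ge 0.1$, hold uniformly. Once the references are fixed, the argument is a two-line concatenation of known results, which is why the proposition is quoted rather than proved in detail.
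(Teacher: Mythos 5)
Your proposal is correct and is essentially the paper's own proof: the first inequality (both bounds) is quoted from Bugeaud--Gy\H{o}ry \cite[Lemma 3]{Bugeaud1}, with the observation that the lower bound $R(S)\ge 0.1$ ultimately rests on Friedman's $R_K\ge 0.2$ from \cite{Fr89}, and the second inequality follows by substituting Siegel's effective estimate $h_K R_K \ll d^{-d}\sqrt{|D|}(\log|D|)^{d-1}$ from \cite[Satz 1]{Si69} into the first. The only (inessential) difference is bibliographic: you point to Brauer--Siegel / Lenstra / Louboutin for the $h_K R_K$ bound, whereas the paper cites Siegel's 1969 paper directly, which moreover gives an explicit constant.
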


For the first inequality see  \cite[Lemma~3]{Bugeaud1}; one may remark that the lower bound ${R(S)\ge 0.1}$ follows from Friedman's famous lower bound~\cite{Fr89} for the usual regulator ${R_{K}\ge 0.2}$.
The second one follows from  Siegel's estimate \cite[Satz~1]{Si69}
$$
h_{K}R_{K} \ll d^{-d} \sqrt {|D|} (\log |D|)^{d-1};
$$
in fact there is an explicit bound for $h_{K}R_{K}$ therein.

\section{Baker's inequality}
In this section we state Baker's inequality, which is the main technical tool of the proof. It is actually an adaptation of a result in \cite{ABBN}.
For the convenience of readers, we also quote its proof with slight change.

For a number field $K$ and $v\in M_{K}$, we denote by $p_{v}$ the underlying prime of~$v$ when~$v$ is non-archimedean. Next, we let
\begin{itemize}


\item
$\theta_0, \theta_1, \cdots, \theta_r$ be non-zero algebraic numbers, belonging to $K$;

\item
$\Theta_0, \Theta_1, \cdots, \Theta_r$ be real numbers satisfying
\begin{align*}
&\Theta_k\ge \max\left\{d\h(\theta_k), 1\right\} \quad (k=0,1, \cdots ,r);
\end{align*}

\item
$b_1, \ldots, b_r$ be rational integers, ${\Lambda =\theta_0\theta_1^{b_1}\cdots\theta_r^{b_r}}$,
$B^{*}=\{|b_{1}|,|b_{2}|,\cdots,|b_{r}|\}$.

\end{itemize}

\begin{theorem}[\cite{ABBN}]
\label{tbaker}
There exists an absolute constant~$C$ that can be determined explicitly such that the following holds.
Assume that ${\Lambda\ne 1}$. Then for any real number $B$ satisfying $B\ge B^{*}$ and $B\ge \max\{3, \Theta_1,\cdots,\Theta_r\}$, we have
$$
\left|\Lambda-1\right|_{v}\ge e^{-\Upsilon  \Theta_0\Theta_1\cdots \Theta_r\log B},
$$
where
$$
\Upsilon =
\begin{cases}
C^r d^2\log(2d), & v\mid \infty,\\
(Cd)^{2r+6}p_{v}^d, & v|p_{v}<\infty.
\end{cases}
$$
\end{theorem}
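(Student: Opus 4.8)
The plan is to derive Theorem~\ref{tbaker} directly from the two deep transcendence estimates quoted in the introduction: Matveev's lower bound \cite{Matveev} for linear forms in logarithms in the Archimedean case, and Yu's bound \cite{Yu07} for the $p$-adic case. Since the statement is essentially a repackaging of \cite{ABBN}, the work is not in inventing anything new but in translating the shape of those classical bounds into the homogeneous, symmetric form in terms of the quantities $\Theta_0,\dots,\Theta_r$ and the single parameter $B$. I would first treat the Archimedean case. Write $\Lambda-1 = \theta_0\theta_1^{b_1}\cdots\theta_r^{b_r}-1$ and note that if $|\Lambda-1|_v \ge 1/2$ there is nothing to prove, so we may assume $|\Lambda-1|_v$ small, in which case $\log|\Lambda-1|_v$ is comparable to $\log|\Lambda'|_v$ where $\Lambda' = b_0\cdot 2\pi i + \sum_{k=1}^r b_k \log\theta_k$ is an associated linear form in logarithms (the auxiliary integer $b_0$ being controlled by $B^\ast$ since $|\log\theta_k|$ is bounded in terms of $\h(\theta_k)$ and the degree). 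Applying Matveev's theorem to this linear form over the field $K$ (or $K(i)$, adjusting $d$ by at most a factor $2$, absorbed into $C$) produces a bound of the form $\log|\Lambda'|_v \ge -C^{r} d^2 \log(2d)\,A_1\cdots A_r \log B$, where each $A_k$ is Matveev's local height parameter $\max\{d\,\h(\theta_k), |\log\theta_k|, \ldots\}$; one then checks $A_k \le c\,\Theta_k$ for an absolute $c$, again absorbed into $C$, giving exactly the claimed $\Upsilon$.

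Next I would handle the non-Archimedean case in the same spirit, invoking Yu's $p$-adic estimate \cite{Yu07} for $|\Lambda-1|_v$ directly (Yu's theorem is naturally stated for $\ord_v(\alpha_1^{b_1}\cdots\alpha_r^{b_r}-1)$, so no passage to a logarithmic form is needed). Yu's bound has the shape $\ord_v(\Lambda-1) \le C(r,d)\, p_v^{d_v}/(\log p_v)\, h_1\cdots h_r\,\log B$ with $C(r,d)$ of the form $(Cd)^{c r}$ for an absolute $c$, and local height factors $h_k$ again bounded by a constant times $\Theta_k$. Converting $\ord_v$ to $|\cdot|_v$ via $|\Lambda-1|_v = p_v^{-\ord_v(\Lambda-1)/e_v} \ge p_v^{-\ord_v(\Lambda-1)}$ and folding the constants, one arrives at $|\Lambda-1|_v \ge e^{-(Cd)^{2r+6} p_v^d \Theta_0\cdots\Theta_r \log B}$. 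The presence of $\Theta_0$ on the right-hand side in both cases is slightly artificial — the linear-forms bounds do not really involve $\theta_0$ — but since $\Theta_0 \ge 1$ by hypothesis, multiplying the bound by $\Theta_0$ only weakens it, so the stated inequality certainly holds; including $\Theta_0$ is harmless and convenient for later applications.

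The main obstacle, and the only place where genuine care is required, is the bookkeeping of the numerical constants and the parameters: one must verify that the exponent of $d$ (namely $2r+6$ in the $p$-adic case) and the exponent of $p_v$ (namely $d$, not $d_v$ or $d\log p_v$) genuinely dominate what Matveev and Yu produce, and that the choice of $B$ — required only to satisfy $B \ge B^\ast$ and $B \ge \max\{3,\Theta_1,\dots,\Theta_r\}$ — is compatible with the hypotheses "$B_0 := \max\{|b_k|\}$" and "$\log B \ge$ (something)" appearing in those theorems. In particular one should check that $\Theta_k \ge 1$ and $\Theta_k \ge d\,\h(\theta_k)$ suffice to replace the authors' individual parameters, and that the factor $\log(2d)$ rather than $\log d$ is enough to cover the degenerate small-$d$ cases (this is why $d'=\max\{d,3\}$-type adjustments appear in the literature). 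None of this is conceptually hard; it is the standard, slightly tedious task of normalising two different sources into one clean statement, which is exactly what \cite{ABBN} did and what we reproduce here with the minor cosmetic change of carrying $\theta_0$.
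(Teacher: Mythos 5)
Your overall plan — read the Archimedean case out of Matveev and the non-Archimedean case out of Yu, then normalise — is the same strategy the paper follows. But there is a genuine conceptual error in the proposal concerning $\theta_0$ and $\Theta_0$, and it is not a cosmetic one.

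You assert that ``the linear-forms bounds do not really involve $\theta_0$'' and that $\Theta_0$ enters the bound only as a harmless multiplicative factor that one could drop. This is wrong. In the product ${\Lambda = \theta_0\theta_1^{b_1}\cdots\theta_r^{b_r}}$ the number $\theta_0$ is one of the $r+1$ algebraic numbers appearing with a (fixed, unit) exponent, and its height must be accounted for. If you drop $\Theta_0$, the theorem is simply false: take ${b_1 = \cdots = b_r = 0}$ and $\theta_0$ of large height very close to $1$; then $|\Lambda-1|_v$ can be arbitrarily small while $\Theta_1,\dots,\Theta_r,B$ stay bounded. Concretely, your proposed linear form $\Lambda' = b_0\cdot 2\pi i + \sum_{k=1}^r b_k\log\theta_k$ is the wrong one — it should include a $\log\theta_0$ term, since if $\Lambda$ is close to $1$ then some branch of $\log\Lambda = \log\theta_0 + \sum_k b_k\log\theta_k - 2\pi i b_0$ is small. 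The paper instead applies Matveev's Corollary 2.3 in its multiplicative form directly with ${n = r+1}$, taking $\theta_0$ as the $n$-th algebraic number with exponent $1$ and $\Theta_0$ as Matveev's $A_n$; the factor $\Theta_0$ is then just one of the height parameters $A_1\cdots A_n$ and is essential.

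A second, smaller gap is in the non-Archimedean case. You summarise Yu's estimate as having the shape ${\ord_v(\Lambda-1) \le C(r,d)\,p_v^{d}/(\log p_v)\,h_1\cdots h_r\log B}$, but Yu's Main Theorem does not directly produce a $\log B$; its right-hand side contains a maximum of two terms, one involving $\log Q$ with $Q$ a product of the $\Theta_k$ and a free parameter $\delta$, the other a term $\delta B$. The paper must choose ${\delta = \min\{\Theta_1\cdots\Theta_r\log B/B,\ 1/2\}}$ and, in the case ${\delta=1/2}$, invoke \cite[Lemma 2.3.3]{BH1} to convert ${B/\log B \le 2\Theta_1\cdots\Theta_r}$ into ${B\ll\Theta_1\cdots\Theta_r\log B}$. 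Your sketch skips this reduction entirely, and without it you do not obtain the stated dependence on $B$. (Again, in the $p$-adic application the paper also carries Yu's $h_n$ for $\theta_0$, bounded by $d^{-1}\Theta_0$, so $\Theta_0$ is genuinely present there too.)
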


\begin{proof}
The Archimedean case is due to Matveev,  see Corollary~2.3 from~\cite{Matveev}. We use this result with ${n=r+1}$, with
${1, b_1, \ldots, b_r}$ as Matveev's ${b_n, b_1,\ldots, b_{n-1}}$, respectively,
${\Theta_0, \Theta_1, \ldots, \Theta_r}$ as Matveev's ${A_n,A_1, \ldots, A_{n-1}}$, respectively, and $B$ as Matveev's~$B$.

Notice that Matveev assumes (in our notations) that
\begin{equation}
\label{ematlog}
\Theta_k \ge |\log \theta_k|,
\end{equation}
with some choice of the complex value of the logarithm. However, if we pick the principal value of the logarithm, then
$$
|\log \theta_k| \le |\log |\theta_k||+\pi \le d\h(\theta_k)+\pi \le (1+\pi) \Theta_k.
$$
Hence we may disregard~\eqref{ematlog} at the cost of increasing the absolute constant~$C$ in the definition of~$\Upsilon$.

In the case of non-archimedean~$v$ we employ the result of Yu~\cite{Yu07}. Precisely, we use the second consequence of his ``Main Theorem'' on page~190 (see the bottom of page~190 and the top of page~191), which asserts that,  assuming~(1.19) of~\cite{Yu07}, but without assuming~(1.5) and~(1.15), the first displayed equation on the top of page~191 of~\cite{Yu07} holds.

In our notations, taking, as in the archimedean case, ${n=r+1}$, using ${1, b_1, \ldots, b_r}$ as Yu's ${b_n, b_1,\ldots, b_{n-1}}$, noticing that Yu's parameters ${h_n,h_1,\ldots, h_{n-1}}$ do not exceed our ${d^{-1}\Theta_0, d^{-1}\Theta_1,\ldots, d^{-1}\Theta_r}$, and setting Yu's~$B_n$ to be~$1$, we re-state Yu's result as follows.  Let $\mathfrak{p}$ be the prime ideal corresponding to $v$ and~$\delta$ a real number satisfying ${0<\delta\le1/2}$; then
\begin{align*}
&\ord_\mathfrak{p}(\Lambda-1) < (Cd)^{2r+5} \frac{p_{v}^d}{(\log p_{v})^2}\max \left\{\Theta_0\Theta_1\cdots\Theta_r\log Q,  \delta B\right\},\\
&Q= \delta^{-1}e^{6r^2}d^{2r}p_{v}^{rd}\Theta_1\cdots\Theta_r.
\end{align*}
Here we replace Yu's~$c_0$ by $d^{r+1}$, Yu's~$c_1$ by $e^{6r^2}d^{3r}$ and Yu's~$C_0$ by ${(Cd)^{3r+6}p_{v}^d(\log p_{v})^{-2}}$,   the constant~$C$ being absolute.
Observing that
$$
\log Q= \log \left(\delta^{-1}\Theta_1\cdots\Theta_r\right)+ O(r^2d\log p_{v}),
$$
and modifying the absolute constant~$C$, we obtain
\begin{equation}
\label{eyu}
\ord_\mathfrak{p}(\Lambda-1) < (Cd)^{2r+6} \frac{p_{v}^d}{\log p_{v}}\max \left\{\Theta_0\Theta_1\cdots\Theta_r\log \left(\delta^{-1}\Theta_1\cdots\Theta_r\right) , \delta B\right\}.
\end{equation}
Notice that $B\ge 3$, then $\log B> 1$. Set now
$$
\delta = \min\left\{\Theta_1\cdots\Theta_r\frac{\log B}{B}, \frac12\right\}.
$$
If ${\delta<1/2}$ then the maximum in~\eqref{eyu} doesn't exceed $\Theta_0\Theta_1\cdots\Theta_r\log B$. And if ${\delta=1/2}$, then
$$
\frac{B}{\log B} \le 2 \Theta_1\cdots\Theta_r,
$$
which, by \cite[Lemma 2.3.3]{BH1}, implies that
$$
B \le 4\Theta_1\cdots\Theta_r\log\left(2\Theta_1\cdots\Theta_r\right) \le 4(r+1) \Theta_1\cdots\Theta_r\log B,
$$
and the maximum in~\eqref{eyu} is at most $2(r+1)\Theta_0\Theta_1\cdots\Theta_r\log B$. So in any case we obtain (again slightly adjusting the absolute constant~$C$)  the estimate
\begin{equation}
\ord_\mathfrak{p}(\Lambda-1) < (Cd)^{2r+6} \frac{p_{v}^d}{\log p_{v}}\Theta_0\Theta_1\cdots\Theta_r \log B.
\end{equation}
Finally, since
${|\Lambda-1|_v= e^{- \frac{\log p_{v}}{e_\mathfrak{p}}\ord_\mathfrak{p}(\Lambda-1)}}$, where~$e_\mathfrak{p}$ is the absolute ramification index of~$\mathfrak{p}$, we obtain the result in the non-archimedean case as well.
\end{proof}

\begin{remark}
{\rm
We choose the form of Baker's inequality in Theorem \ref{tbaker} because of its convenience for our computations, although it is effective but not explicit. If one want to get an explicit bound for $\h(P)$, he can apply Matveev \cite{Matveev} and Yu \cite{Yu07} respectively, like \cite{GY},
and he also can apply \cite[Theorem C]{Gyory} to handle uniformly with the Archimedean and non-Archimedean cases.
}
\end{remark}

\section{The case of mixed level}
\label{mixed level}
In this section, we assume that $N$ has at least two distinct prime factors. Then we will apply Baker's inequality to prove Theorem \ref{main1}
and \ref{main2}.

In the sequel, we assume that $P$ is an $S_{0}$-integral point of $X_{G}$ and $\nu_{\infty}(G)\ge 3$.
What we want to do is to obtain some bounds for $\h(P)$.

From now on we let $K=K_{0}\cdot\Q(\zeta_{N})=K_{0}(\zeta_{N})$.
Let $S$ be the set consisting of the extensions of the places from $S_{0}$ to $K$, i.e.
$$
S=\{v\in M_{K}:v|v_{0}\in S_{0}\}.
$$
Then $P$ is also an $S$-integral point of $X_{G}$.

Put $d=[K:\Q]$, $s=|S|$ and $r=s-1$. Since $j(P)\in \OO_{S}$, we have
$$
\h(P)=d^{-1}\sum\limits_{v\in S}d_{v}\log^{+}|j(P)|_{v}\le\sum\limits_{v\in S}\log^{+}|j(P)|_{v}.
$$
 Then there exists some $w\in S$ such that
$$
\h(P)\le s\log|j(P)|_{w}.
$$
We fix this valuation $w$ from now on. Therefore, we only need to bound $\log|j(P)|_{w}$.

As the discussion in Subsection \ref{X_{G_{1}}}, $P$ is also an $S$-integral point of $X_{G_{1}}$.
Hence for our purposes, we only need to focus on the modular curve $X_{G_{1}}$.

We partition the set $S$ into three pairwise disjoint subsets: $S=S_{1}\cup S_{2}\cup S_{3}$,
where $S_{1}$ consists of places $v\in S$ such that $P\in X_{G_{1}}(\bar{K}_{v})^{+}$,
$S_{2}=M_{K}^{\infty}\setminus S_{1}$, and $S_{3}=S\setminus (S_{1}\cup S_{2})$.

From now on, for $v\in S_{1}$ let $c_{v}$ be a $v$-nearby cusp of $P$, and we write $q_{v}$ for $q_{c_{v}}$ and $e_{v}$ for $e_{c_v}$.
Notice that for any $v\in S_{3}$, it is non-Archimedean with $|j(P)|_{v}\le 1$.

In the sequel we can assume that $|j(P)|_{w}>3500$, otherwise we can get a
better bound than those given in Section 1. Then we have $w\in S_{1}$ and $P\in \Omega_{c_w,w}$.
Therefore, by (\ref{nearby2}) we only need to bound $\log |q_{w}(P)^{-1}|_{w}$.

From now on we assume that $|q_{w}(P)|_{w}\le 10^{-N}$. Indeed, applying (\ref{nearby2}) the inequality $|q_{w}(P)|_{w}> 10^{-N}$ yields 
$\h(P)<3sN$, which is a much better estimate for $\h(P)$ than those given in Section 1.

Notice that under our assumptions, we see that $N\ge 2$. Moreover, in this section we assume that $s\ge 2$.
In fact, if $s=1$, then we can add another valuation to $S$ such that $s=2$, and then the final results of this section also hold.

\subsection{Preparation for Baker's inequality}\label{preparation}
We fix  an orbit $\OO$ of the group action $\mathcal{A}_{N}/G_{1}$ as follows.
Put $U=u_{\OO}$, where $u_{\OO}$ is defined in (\ref{uo}).

If $\ord_{c_w}U\ne 0$, we choose $\OO$ such that $\ord_{c_w}U< 0$ according to Proposition \ref{u1}.
Noticing $v_{\infty}(G)\ge 3$ and combining with Proposition \ref{munit} (vi), we can choose another orbit $\OO^{\prime}$
such that $U$ and $V$ are multiplicatively independent modulo constants with $\ord_{c_w}V>0$, where $V=u_{\OO^{\prime}}$.

Define the following function
\begin{equation}
W=\left\{ \begin{array}{ll}
                 U & \textrm{if $\ord_{c_w}U=0$},\\
                  \\
                 U^{\ord_{c_w}V}V^{-\ord_{c_w}U} & \textrm{if $\ord_{c_w}U\ne 0$}.
                 \end{array} \right.
\notag
\end{equation}
So we always have $\ord_{c_w}W=0$ and $W(P)\in \OO_{S}$.
In particular, $W$ is integral over $\Z[j]$. Moreover,  $W$ is not a constant by Proposition \ref{munit} (vi).

By Proposition \ref{munit} (ii) and (iii), we have
\begin{equation}
\gamma_{w}^{-1}W(P)=1+O_{w}(4^{24N^{7}}|q_{w}(P)|_{w}^{1/N}),
\end{equation}
where
\begin{equation}
\gamma_w=\left\{ \begin{array}{ll}
                 \gamma_{\OO,c_w} & \textrm{if $\ord_{c_w}U=0$},\\
                  \\
                 \gamma_{\OO,c_w}^{\ord_{c_w}V}\gamma_{\OO^{\prime},c_w}^{-\ord_{c_w}U} & \textrm{if $\ord_{c_w}U\ne 0$};
                 \end{array} \right.
\notag
\end{equation}
and
$$
\h(\gamma_w)\le 24N^{7}\log 2.
$$

By Proposition \ref{u1}, we know that $W(P)$ is a unit of $\OO_{S}$. So there exist some integers $b_{1},\cdots, b_{r}\in\Z$ such that
$W(P)=\omega\eta_{1}^{b_{1}}\cdots\eta_{r}^{b_{r}}$, where $\omega$ is a root of unity and $\eta_{1},\cdots,\eta_{r}$ are from Proposition \ref{S-unit}.
Let $\eta_{0}=\omega\gamma_{w}^{-1}$. Then we set
\begin{equation}
\Lambda=\gamma_{w}^{-1}W(P)=\eta_{0}\eta_{1}^{b_{1}}\cdots\eta_{r}^{b_{r}}.
\end{equation}
Notice that $\eta_{0},\cdots,\eta_{r}\in K$ and
\begin{equation}\label{Lambda1}
|\Lambda-1|_{w}\le 4^{24N^{7}}|q_{w}(P)|_{w}^{1/N}.
\end{equation}

For subsequent deductions, we need to bound $\h(W(P))$.
\begin{proposition}\label{h(W)}
We have
$$
\h(W(P))\le 2sN^{8}\log|q_w^{-1}(P)|_w+94sN^{8}\log N. 
$$
\end{proposition}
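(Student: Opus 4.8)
The goal is to bound $\h(W(P))$ from above in terms of $\log|q_w^{-1}(P)|_w$ and $\log N$. Recall $\h(W(P)) = d^{-1}\sum_{v\in M_K} d_v \log^+|W(P)|_v$. Since $W(P)$ is a unit of $\OO_S$ (by Proposition \ref{munit}(i) and Proposition \ref{u1} applied to $W = U^{\ord_{c_w}V}V^{-\ord_{c_w}U}$, whose divisor is supported at cusps and which together with $\lambda W^{-1}$ is integral over $\Z[j]$ after the $S$-integral point substitution), we have $|W(P)|_v = 1$ for all $v\notin S$, so the sum reduces to $v\in S$. Thus I only need to estimate $\log^+|W(P)|_v$ for the finitely many $v\in S = S_1\cup S_2\cup S_3$, and $s\log|q_w^{-1}(P)|_w$ already dominates $\h(P)$, so a bound linear in $s$ and in $\log|q_w^{-1}(P)|_w$ is what to aim for.

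First I would handle $v\in S_1$, the places where $P\in X_{G_1}(\bar K_v)^+$, using Proposition \ref{munit}(iv): for each such $v$, $\big|\log|U(P)|_v - (\ord_{c_v}U/e_{c_v})\log|q_v(P)|_v\big|\le \rho_v$, and similarly for $V$. Since $|\ord_c(\cdot)| < N^4$ and $e_c \mid N$, and since $W$ is built from $U,V$ with exponents $\ord_{c_w}V,\ord_{c_w}U$ of size $< N^4$, the exponent $\ord_{c_v}W/e_{c_v}$ appearing is $O(N^8)$ in absolute value. The quantity $\rho_v$ is $12N^3\log N$ at Archimedean places and $\le 12N^3\log\ell/(\ell-1) \le 12N^3\log N$ at the bad finite places (and $0$ where $|N|_v=1$), so each $v\in S_1$ contributes at most roughly $N^8\big(|\log|q_v(P)|_v| + \text{const}\cdot N^3\log N\big)$ to the relevant sum; one then needs $\sum_{v\in S_1}|\log|q_v(P)|_v|$, which by Proposition \ref{nearby} is comparable to $\sum_{v\in S_1}\log^+|j(P)|_v \le d\,\h(P) \le ds\log|q_w^{-1}(P)|_w$ — actually the cleaner route is to bound this directly by $\sum_{v\in S}\log^+|j(P)|_v \le d\h(P)$, and then invoke $\h(P)\le s\log|q_w^{-1}(P)|_w$ from the earlier setup. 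For $v\in S_2$ (Archimedean, but $P\notin X_{G_1}(\bar K_v)^+$, so $|j(P)|_v\le 3500$) I would use Proposition \ref{munit}(v), which gives $\big|\log|U(P)|_v\big|\le N^3\log(|j(P)|_v+2400)+\rho_v \le N^3\log(5900)+12N^3\log N$, hence a contribution of order $N^8\log N$ per place; and $|S_2|\le d$. For $v\in S_3$ (finite, $|j(P)|_v\le 1$), $\log^+|W(P)|_v$: here $U(P)$ is integral over $\Z[j]$ and $|j(P)|_v\le 1$, so $|U(P)|_v\le 1$... but $W$ also involves $V^{-\ord_{c_w}U}$ and $\lambda U^{-1}$ is only integral over $\Z[j]$ up to the factor $\lambda$, so $\log^+|W(P)|_v \le |\ord_{c_w}U|\cdot\log^+|\lambda|_v^{-1}/(12N^3)$-type terms — one controls this via $\h(\lambda)\le 12N^3\log 2$ and the divisor bounds, again giving $O(N^8\log N)$ per place, with $|S_3|\le s$.

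Assembling: $d\,\h(W(P)) = \sum_{v\in S}d_v\log^+|W(P)|_v$, where the $S_1$ part is $\le c\,N^8\sum_{v\in S_1}d_v|\log|q_v(P)|_v| + c\,N^8\log N\sum_{v\in S_1}d_v$, the first sum being $\le c\,N^8\cdot d\,\h(P)\le c\,N^8\cdot ds\log|q_w^{-1}(P)|_w$ and $\sum_{v}d_v\le d$; the $S_2$ and $S_3$ parts contribute $\le c\,dN^8\log N + c\,N^8\log N\sum_{v\in S_3}d_v$. Dividing by $d$ and absorbing the $\sum d_v/d\le 1$ factors, one gets $\h(W(P))\le c_1 sN^8\log|q_w^{-1}(P)|_w + c_2 sN^8\log N$ for absolute constants $c_1,c_2$; tracking the constants carefully through Proposition \ref{munit}(iv)--(v) (the factors $12$, the $N^3$ versus $N^4$ versus $e_c\mid N$, the $N^8$ from the product $U^{\ord_{c_w}V}V^{-\ord_{c_w}U}$) should yield the stated $2sN^8\log|q_w^{-1}(P)|_w + 94sN^8\log N$.

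\textbf{Main obstacle.} The delicate point is bookkeeping the exponents and constants so that the coefficient of $\log|q_w^{-1}(P)|_w$ comes out as $2sN^8$ and not something slightly larger: this requires using $\h(P)\le s\log|q_w^{-1}(P)|_w$ rather than a cruder bound, and being careful that the ``leading'' contribution comes only from the $v\in S_1$ term via Proposition \ref{munit}(iv), with everything else absorbed into the $94sN^8\log N$ error term. A secondary subtlety is the treatment of $v\in S_3$: one must confirm that $\log^+|W(P)|_v$ is genuinely $O(N^8\log N)$ there — i.e., that the only obstruction to $|W(P)|_v\le 1$ is the factor $\lambda$ of bounded height — which follows from Proposition \ref{munit}(i) combined with the explicit form of $W$ and the bound $|\ord_{c_w}(\cdot)|<N^4$.
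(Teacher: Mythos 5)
Your proposal follows essentially the same route as the paper: partition $S$ into $S_1\cup S_2\cup S_3$, use Proposition \ref{munit}(iv) at the $S_1$ places, Proposition \ref{munit}(v) at the $S_2$ places, integrality at the finite places, and then close the loop with $\h(P)\le s\log|q_w(P)^{-1}|_w$; the paper does this explicitly with constants in the case $W=U$ and then notes the general case follows with $N^{4}$ replaced by $2N^{8}$. One small correction: your concern about negative exponents and the factor $\lambda$ for $v\in S_3$ is unnecessary, since the construction in the paper chooses $\ord_{c_w}U<0$ and $\ord_{c_w}V>0$, so both exponents in $W=U^{\ord_{c_w}V}V^{-\ord_{c_w}U}$ are positive, $W$ is integral over $\Z[j]$, and the $S_3$ contribution is in fact exactly zero.
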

\begin{proof}
First suppose that $\ord_{c_w}U=0$. Then $W=U$.
For $v\in S_{3}$, $j(P)$ is a $v$-adic integer.
Hence, so is the number $W(P)$.
In addition, it is easy to see that
$$
\sum\limits_{v\in M_{K}^{\infty}}d_{v}\rho_{v}=12dN^{3}\log N, \quad \sum\limits_{v\in M_{K}^{0}}d_{v}\rho_{v}\le 12dN^{3}\log N.
$$

Notice that for $v\in S_{1}$, $|\ord_{c_v}(W)|\le N^{4}$. Applying Proposition \ref{munit} (iv) and (\ref{nearby2}),  we have
\begin{align*}
d^{-1}\sum\limits_{v\in S_{1}}d_{v}\log^{+}|W(P)|_{v}
&\le N^{4}d^{-1}\sum\limits_{v\in S_{1}}d_{v}\log|q_v(P)^{-1}|_v+d^{-1}\sum\limits_{v\in S_{1}}d_{v}\rho_{v}\\
&\le N^{4}d^{-1}\sum\limits_{v\in S_{1}}d_{v}\log|j(P)|_v+sN^{4}\log\frac{3}{2}+24N^{3}\log N\\
&\le N^{4}\h(P)+sN^{4}\log\frac{3}{2}+24N^{3}\log N\\
&\le sN^{4}\log|j(P)|_w+sN^{4}\log\frac{3}{2}+24N^{3}\log N\\
&\le sN^{4}\log|q_w(P)^{-1}|_w+sN^{4}\log 3+24N^{3}\log N.
\end{align*}

It follows from Proposition \ref{munit} (v) that
$$
d^{-1}\sum\limits_{v\in S_{2}}d_{v}\log^{+}|W(P)|_{v}
\le N^{3}\log 5900 +12N^{3}\log N.
$$

Hence, we get
\begin{align*}
\h(W(P))&=d^{-1}\sum\limits_{v\in S_{1}\cup S_{2}}d_{v}\log^{+}|W(P)|_{v}\\
&\le sN^{4}\log|q_w(P)^{-1}|_w+sN^{4}\log 3+36N^{3}\log N+N^{3}\log 5900.
\end{align*}

Now suppose that $\ord_{c_w}U\ne 0$. For any $v\in S_{1}$, we have
$$
|\log|W(P)|_{v}|\le\frac{|\ord_{c_v}(W)|}{e_{v}}\log|q_{v}(P)^{-1}|_{v}+ 2N^{4}\rho_{v}.
$$
Here note that $|\ord_{c_v}(W)|\le 2N^{8}$.
For any $v\in M_{K}^{\infty}$, we have
$$
|\log|W(P)|_{v}|\le 2N^{7}\log(|j(P)|_{v}+2400)+2N^{4}\rho_{v}.
$$
Apply the same argument as the above, we obtain
$$
\h(W(P))\le 2sN^{8}\log|q_w(P)^{-1}|_w+2sN^{8}\log 3+72N^{7}\log N+2N^{7}\log 5900.
$$

Now it is easy to get the desired result.
\end{proof}

\subsection{Using Baker's inequality}

If $\Lambda=1$, we can get better bounds for $\h(P)$ than those given in Section 1, see Section \ref{specialcase}.
So in the rest of this section we assume that $\Lambda\ne 1$.

Let $B^{*}=\max\{|b_{1}|,\cdots,|b_{r}|\}$, and let $\Theta_{0},\Theta_{1},\cdots,\Theta_{r}$ be real numbers
satisfying
\begin{align*}
&\Theta_{k}\ge \max\{d\h(\eta_{k}),1\}, \quad k=0,\cdots,r.
\end{align*}

By Theorem \ref{tbaker}, there exists an absolute constant $C$ which can be
determined explicitly such that the following holds. Choosing $B\ge B^{*}$ and $B\ge \max\{3, \Theta_1,\cdots,\Theta_r\}$, we have
\begin{equation}\label{Lambda2}
|\Lambda-1|_{w}\ge e^{-\Upsilon\Theta_{0}\Theta_{1}\cdots\Theta_{r}\log B},
\end{equation}
where
$$
\Upsilon =
\begin{cases}
C^r d^2\log(2d), & w\mid \infty,\\
(Cd)^{2r+6}p^d, & \textrm{otherwise}.
\end{cases}
$$
Recall that $p$ has been defined in Section 1.

Applying (\ref{Lambda1}), we have
$$
e^{-\Upsilon\Theta_{0}\Theta_{1}\cdots\Theta_{r}\log B}\le 4^{24N^{7}} |q_{w}(P)|_{w}^{1/N}.
$$
Hence, we obtain
\begin{equation}\label{q1}
\log |q_{w}(P)^{-1}|_{w}\le N\Upsilon\Theta_{0}\Theta_{1}\cdots\Theta_{r}\log B
+48N^{8}\log 2.
\end{equation}

According to Proposition \ref{S-unit}, we can choose
\begin{align*}
&\Theta_{k}=d\zeta\h(\eta_{k}),\quad k=1,\cdots,r.
\end{align*}
So we have
$$
\Theta_{1}\cdots\Theta_{r}\le r^{2r}\zeta^{r}R(S).
$$

Since
\begin{align*}
d\h(\eta_{0})= d\h(\gamma_w)\le 24dN^{7}\log 2,
\end{align*}
we can choose
$$
\Theta_{0}=24dN^{7}\log 2.
$$

Corollary \ref{eta} tells us that
\begin{align*}
B^{*}
&\le 2dr^{2r}\zeta\h(W(P)).
\end{align*}
Notice that we also need $B\ge \max\{3, \Theta_1,\cdots,\Theta_r\}$, by Proposition \ref{S-unit} and Proposition \ref{h(W)} we can choose
$$
B=r^{2r}\zeta^{r}R(S)+2dr^{2r}\zeta\left(2sN^{8}\log|q_w(P)^{-1}|_w+94sN^{8}\log N\right).
$$
Again, we write $B=\alpha\log|q_w(P)^{-1}|_w+\beta$, where
\begin{align*}
&\alpha=4dsr^{2r}\zeta N^{8},\\
&\beta=r^{2r}\zeta^{r}R(S)+188dsr^{2r}\zeta N^{8}\log N.
\end{align*}

Hence, (\ref{q1}) yields
$$
\alpha\log|q_w(P)^{-1}|_w+\beta \le \alpha N\Upsilon\Theta_{0}\Theta_{1}\cdots\Theta_{r}\log(\alpha\log|q_w(P)^{-1}|_w+\beta)
+48\alpha N^{8}\log 2+\beta.
$$
Here we put $C_{1}=\alpha N\Upsilon\Theta_{0}\Theta_{1}\cdots\Theta_{r}$ and $C_{2}=48\alpha N^{8}\log 2+\beta$, then
$$
\alpha\log|q_w(P)^{-1}|_w+\beta \le C_{1}\log(\alpha\log|q_w(P)^{-1}|_w+\beta)+C_{2}.
$$
Therefore, by \cite[Lemma 2.3.3]{BH1} we obtain
$$
\alpha\log|q_w(P)^{-1}|_w+\beta\le 2(C_{1}\log C_{1}+C_{2}).
$$
Hence
$$
\log|q_w(P)^{-1}|_w\le 2\alpha^{-1}C_{1}\log C_{1}+\alpha^{-1}(2C_{2}-\beta).
$$
That is
$$
\log|j(P)|_{w}\le 2\alpha^{-1}C_{1}\log C_{1}+\alpha^{-1}(2C_{2}-\beta)+\log 2.
$$

So we have
\begin{equation}
\h(P)\le 2s\alpha^{-1}C_{1}\log C_{1}+s\alpha^{-1}(2C_{2}-\beta)+s\log 2.
\notag
\end{equation}

Finally we get
\begin{equation}\label{h(P)}
\h(P)\ll dsr^{2r}\zeta^{r}N^{8}\Upsilon R(S)\log(d^{2}sr^{4r}\zeta^{r+1}N^{16}\Upsilon R(S)).
\end{equation}

To get a bound for $\h(P)$, we only need to calculate the quantities in the above inequality.

\subsection{Proof of Theorem \ref{main1}}

Under the assumptions of Theorem \ref{main1}, we have  $K=\Q(\zeta_{N})$ and $S=M_{K}^{\infty}$.
Since we have assumed that $s\ge 2$, we have $\varphi(N)\ge 4$.

Then $|D|\le N^{\varphi(N)}$ according to \cite[Proposition 2.7]{Wa}. It follows from Proposition \ref{S-regulator} that
$$
R(S)\ll \varphi(N)^{-1}N^{\varphi(N)/2}(\log N)^{\varphi(N)-1}.
$$

Notice that
\begin{align*}
&s=\varphi(N)/2,\\
&\zeta \ll (\log \varphi(N))^{3},\\
&\Upsilon=C^{\frac{\varphi(N)}{2}-1}\varphi(N)^2\log (2\varphi(N)),\\
&\log(d^{2}sr^{4r}\zeta^{r+1}N^{16}\Upsilon R(S))\ll \varphi(N)\log N.
\end{align*}
Applying (\ref{h(P)}) we obtain
\begin{align*}
\h(P)
&\le C^{\varphi(N)}(\varphi(N))^{\varphi(N)+2}(\log \varphi(N))^{\frac{3}{2}\varphi(N)-2}N^{\frac{1}{2}\varphi(N)+8}(\log N)^{\varphi(N)},\\
&\le C^{\varphi(N)}N^{\frac{3}{2}\varphi(N)+10}(\log N)^{\frac{5}{2}\varphi(N)-2},
\end{align*}
the constant $C$ being modified. Hence we prove Theorem \ref{main1}.

\subsection{Proof of Theorem \ref{main2}}
Now we need to give a bound for $\h(P)$ based on the parameters of $K_{0}$ with the assumptions of Theorem \ref{main2}.

Firstly, notice that
\begin{align*}
&s\le s_{0}\varphi(N),\\
&r=s-1\le s_{0}\varphi(N)-1,\\
&d\le d_{0}\varphi(N),\\
&\zeta \ll (\log d)^{3}\le (\log (d_{0}\varphi(N)))^{3}.
\end{align*}

Using Proposition \ref{S-regulator}, we estimate $R(S)$ as follows:
$$
R(S)\ll d^{-d}\sqrt{|D|}(\log |D|)^{d-1}\prod\limits_{\substack{v\in S\\v\nmid \infty}}\log\NN_{K/\Q}(v).
$$
Since $\NN_{K/\Q}(v)\le p^{[K:\Q]}=p^{d}$, this implies the upper bound
\begin{equation}\label{logR}
\log R(S)\ll \frac{1}{2}\log |D|+d\log\log |D|+s\log (dp).
\end{equation}

Let $D_{K/K_{0}}$ be the relative discriminant of $K/K_{0}$. We have
$$
D=\mathcal{N}_{K_{0}/\Q}(D_{K/K_{0}})D_{0}^{[K:K_{0}]}.
$$
We denote by $\OO_{K_{0}}$ and $\OO_{K}$ the ring of integers of $K_0$ and $K$ respectively. Since $K=K_{0}(\zeta_{N})$, we have
$$
\OO_{K_{0}}\subseteq \OO_{K_{0}}[\zeta_{N}]\subseteq \OO_{K}.
$$
By \cite[III (2.20) (b)]{Frohlich} and note that the absolute value of the discriminant of the polynomial $x^{N}-1$ is $N^{N}$, we get
$$
D_{K/K_{0}}|N^{N}.
$$
So
$$
|\mathcal{N}_{K_{0}/\Q}(D_{K/K_{0}})|\le N^{d_{0}N}.
$$
Hence
$$
|D|\le N^{d_{0}N}|D_{0}|^{\varphi(N)}.
$$

Now let $v_{0}$ be a non-archimedean place of $K_{0}$, and $v_{1},\cdots,v_{m}$ all its
extensions to $K$, their residue degrees over $K_{0}$ being $f_{1},\cdots,f_{m}$ respectively.
Then $f_{1}+\cdots+f_{m}\le [K:K_{0}]\le \varphi(N)$, which implies that $f_{1}\cdots f_{m}\le 2^{\varphi(N)}$.
Notice that we always have $2\log\NN_{K_{0}/\Q}(v_{0})>1$.
Since $\NN_{K/\Q}(v_{k})=\NN_{K_{0}/\Q}(v_{0})^{f_{k}}$ for $1\le k\le m$ and $m\le \varphi(N)$, we have
\begin{align*}
\prod\limits_{k=1}^{m}\log\NN_{K/\Q}(v_k)
&\le 2^{\varphi(N)}(\log\NN_{K_{0}/\Q}(v_{0}))^m\\
&\le 2^{\varphi(N)}(2\log\NN_{K_{0}/\Q}(v_{0}))^m\\
&\le 4^{\varphi(N)}(\log\NN_{K_{0}/\Q}(v_{0}))^{\varphi(N)}.
\end{align*}
Hence
\begin{equation}\label{logN}
\prod\limits_{\substack{v\in S\\v\nmid \infty}}\log\NN_{K/\Q}(v)
\le 4^{s_{0}\varphi(N)}\left(\prod\limits_{\substack{v\in S_{0}\\v\nmid \infty}}\log\NN_{K_{0}/\Q}(v)\right)^{\varphi(N)}.
\end{equation}

If we now denote by $\Delta$ the quantity defined in \eqref{Delta}, then using \eqref{logR} and \eqref{logN},
we obtain the following estimates:
\begin{align*}
& R(S)\ll 4^{s_{0}\varphi(N)}\Delta,\\
& R(S)\log R(S)\ll 4^{s_{0}\varphi(N)}s_{0}\Delta\log p,\\
& R(S)\log(d^{2}sr^{4r}\zeta^{r+1}N^{16}\Upsilon R(S))\ll 4^{s_{0}\varphi(N)}s_{0}\Delta\log (ps_{0}).
\end{align*}
Here we always choose $\Upsilon=(Cd)^{2r+6}p^d$.

Finally, using (\ref{h(P)}) and noticing that $d_0\le 2s_0$, we get
\begin{align*}
\h(P)&\le \left(Cd_{0}s_{0}\varphi(N)^{2}\right)^{2s_{0}\varphi(N)}(\log(d_{0}\varphi(N)))^{3s_{0}\varphi(N)}N^{8}p^{d_{0}\varphi(N)}\Delta\log p\\
&\le \left(Cd_{0}s_{0}N^{2}\right)^{2s_{0}N}(\log(d_{0}N))^{3s_{0}N}p^{d_{0}N}\Delta.
\end{align*}
the constant $C$ being modified.

Therefore, Theorem \ref{main2} is proved.

\section{The case of prime power level}\label{prime power}
In this section, we assume that $N$ is a prime power.

As Section \ref{mixed level}, we can define a similar function $W$. But in this case $W(P)$ is not a unit of $\OO_{S}$
by Proposition \ref{u1}.
So we need to raise the level. Put
\begin{equation}
M=\left\{ \begin{array}{ll}
                 2N & \textrm{if $N$ is not a power of 2},\\
                  \\
                 3N & \textrm{if $N$ is a power of 2}.
                 \end{array} \right.
\notag
\end{equation}
Notice that $X_{G}$ is also a modular curve of level $M$ and $\nu_{\infty}(G)\ge 3$, since we have the following natural sequence of morphisms
$$
X(M)\to X(N)\to X_{G}\to X(1).
$$

Since $\Gal(\Q(X(M))/\Q(j))=\GL_{2}(\Z/M\Z)/\pm 1$, $X_{G}$ corresponds to a subgroup $\widetilde{G}$ of $\GL_{2}(\Z/M\Z)$ containing $\pm 1$.
In fact, The restriction of $\widetilde{G}$ on $X(N)$ is $G$. The modular curve $X_{\widetilde{G}}$ has the same integral geometric model as $X_{G}$.
In particular, $P$ is also an $S_{0}$-integral point of $X_{\widetilde{G}}$.

Therefore, from Theorem \ref{main1} and \ref{main2}, we can get two upper bounds for $\h(P)$ by replacing $N$ by $M$, which proves Theorem \ref{main3}.

\section{The case $\Lambda =1$}
\label{specialcase}
In this section, we suppose that $N$ is not a prime power without loss of generality.  Under the assumption $\Lambda=1$ we can obtain better bounds for $\h(P)$ than those given in Section 1.

Let $c$ be a cusp of $X_{G_1}$ and $v\in M_{K}$. We also denote by $v$ the unique extension of $v$ to $\bar{K}_{v}$.
Recall $\Omega_{c,v}$ and the $q$-parameter $q_c$ mentioned in Section \ref{X_{G_{1}}}, for the modular function $U$ defined in Section \ref{preparation}, we get the following lemma.
\begin{lemma}\label{Taylor1}
There exist an integer-valued function $f(\cdot)$ with respect to $q_{c}$ and $\lambda_{1}^{c}, \lambda_{2}^{c}, \lambda_{3}^{c}\cdots\in \Q(\zeta_{N})$ such that the following identity holds in $v$-adic sense,
\begin{equation}\label{Taylor0}
\log\frac{U(q_c)}{\gamma_{\OO,c}q_{c}^{\frac{\ord_{c}U}{e_{c}}}}=2\pi f(q_{c})i+\sum\limits_{k=1}^{\infty}\lambda_{k}^{c}q_{c}^{k/N},
\end{equation}
and
\begin{equation}
|\lambda_{k}^{c}|_{v}\le\left\{ \begin{array}{ll}
                 |k|_{v}^{-1} & \textrm{if $v$ is finite},\\
                 24N^{2}(k+N) & \textrm{if $v$ is infinite}.
                 \end{array} \right.
\notag
\end{equation}
In particular, for every $k\ge 1$ we have
$$
\h(\lambda_{k}^{c})\le \log(24N^{3}+24kN^{2})+\log k.
$$
\end{lemma}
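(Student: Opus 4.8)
The plan is to start from the product expansion of $u_{\OO} = U$ coming from the Siegel function product. Recall $U = \prod_{\textbf{a}\in\OO} g_{\textbf{a}}^{12N}$, and by Proposition \ref{ga1} each factor $g_{\textbf{a}}(q_c)$ equals $-\gamma_{\textbf{a}} q_c^{\ell_{\textbf{a}}}$ times a product of terms of the shape $(1 - q_c^{n/N} e^{\pm 2\pi i a_2})$ (after passing through the relation $q_c = t_c^{e_c}$ and $e_c \mid N$, so that the exponents $n+a_1$ etc. become multiples of $1/N$). Collecting the constants into $\gamma_{\OO,c}$ and the leading power into $q_c^{\ord_c U/e_c}$, I get
\begin{equation}
\frac{U(q_c)}{\gamma_{\OO,c}\, q_c^{\ord_c U/e_c}} = \prod_{m} (1 - c_m q_c^{m/N}),
\notag
\end{equation}
a convergent product (for $|q_c|_v < 1$) over finitely many residues times infinitely many shifts, where each $c_m$ is a sum of $12N$-th powers of roots of unity from $\Q(\zeta_N)$, so $|c_m|_v \le 12N$ in the Archimedean case and $|c_m|_v \le 1$ in the non-Archimedean case. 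Taking the principal branch of $\log$ of each factor introduces an integer multiple of $2\pi i$ to account for branch choices; absorbing all of these into a single integer-valued function $f(q_c)$ gives the shape of \eqref{Taylor0}, and the coefficients $\lambda_k^c$ are obtained by expanding each $\log(1 - c_m q_c^{m/N}) = -\sum_{j\ge 1} c_m^j q_c^{mj/N}/j$ and regrouping by the total power $k/N$.

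Next I would bound $\lambda_k^c$. In the non-Archimedean case, $\lambda_k^c$ is a $\Z$-linear combination (with coefficients $1/j$, $j \le k$) of products $\prod c_m^{j_m}$, and since all $c_m$ are algebraic integers (roots of unity to powers), the only denominators come from the $1/j$, whence $|\lambda_k^c|_v \le \max_{j \le k} |j|_v^{-1} = |k|_v^{-1}$ — actually $|\lambda_k^c|_v \le \max_j |1/j|_v$, and one checks this is dominated by $|k!|_v^{-1}$ but in fact the sharper $|k|_v^{-1}$ holds because only $j \mid k$ (more precisely $j$ dividing the relevant power) can contribute; I would phrase the argument so the clean bound $|k|_v^{-1}$ falls out. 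In the Archimedean case I count: the number of ways to write $k/N$ as a sum $\sum m_i j_i /N$ is at most the number of partitions of $k$, and combining the $|c_m|_v \le 12N$ bounds with the partition count $|P_k| < e^{k/2}$ (from the paragraph before Proposition \ref{ga1}) and the factor from the $12N$ exponent and the two residue classes $a_1, 1-a_1$ gives a bound of the form $24N^2(k+N)$; the extra additive $N$ handles the leading, $k$-independent contributions near the cusp. Finally, $\h(\lambda_k^c) \le d^{-1}\sum_v d_v \log^+|\lambda_k^c|_v$, and splitting into the Archimedean places (contributing $\log(24N^2(k+N)) = \log(24N^3 + 24kN^2)$) and the finite places (contributing $\sum_{v\mid k} d_v \log|k|_v^{-1} \le \log k$ by the product formula) yields $\h(\lambda_k^c) \le \log(24N^3 + 24kN^2) + \log k$.

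The main obstacle I expect is the careful bookkeeping of the Archimedean coefficient bound: unlike Proposition \ref{ga1}, where the coefficients $\phi_{\textbf{a}}(k)$ of the \emph{product} were bounded by $e^k$ via the partition count, here I am taking a logarithm, so I must track both the combinatorial explosion of cross-terms across the infinitely many factors $(1-c_m q_c^{m/N})$ and the size $|c_m|_v \le 12N$ of each coefficient, while keeping the final bound merely polynomial in $N$ and linear in $k$. The cleanest route is probably to differentiate the identity \eqref{Taylor0} logarithmically, so that $\sum_k (k/N)\lambda_k^c q_c^{k/N} = q_c \frac{d}{dq_c}\log(U/\cdots)$ becomes a sum $\sum_m \frac{(m/N) c_m q_c^{m/N}}{1 - c_m q_c^{m/N}} = \sum_m \sum_{j\ge 1} (m/N) c_m^j q_c^{mj/N}$, which has \emph{positive} coefficients to bound and avoids the $1/j$ in the Archimedean estimate entirely, giving $(k/N)|\lambda_k^c|_v \le (k/N)\sum_{\text{divisor-type decompositions}} (12N)^{(\cdots)}$; then I divide by $k/N$ and absorb everything into $24N^2(k+N)$. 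For the non-Archimedean bound I keep the direct $\log$-expansion since there the $1/j$ is exactly what produces the stated $|k|_v^{-1}$.
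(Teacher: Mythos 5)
Your overall strategy (the infinite product expansion of $U/(\gamma_{\OO,c}q_c^{\ord_c U/e_c})$, taking logarithms, Taylor-expanding each $\log(1-\cdot)$, and recollecting by powers of $q_c^{1/N}$) is the same as the paper's, and your final height computation via the product formula is correct. But the Archimedean coefficient bound is where the proposal breaks down, and it is exactly where the lemma lives or dies.

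The key point you miss is that the logarithm of a product is a sum: there are no cross-terms between the infinitely many factors, and hence no ``combinatorial explosion'' to manage and no role for the partition count $|P_k|<e^{k/2}$. That count is used in Proposition~\ref{ga1} because there one expands a \emph{product}; here it is irrelevant. If you tried to use $|P_k|<e^{k/2}$ you would get a bound exponential in~$k$, not the required $24N^2(k+N)$. The paper instead observes that for a fixed $\mathbf{a}=(k_1/N,a_2)\in\OO$, fixed sign, and fixed $k$, the coefficient of $q_c^{k/N}$ in $\log(1-q_c^{n+a_1}e^{2\pi i a_2})$ is nonzero only when $k=j(nN+k_1)$ for a unique $j\ge1$ depending on~$n$, which forces $n\le k/N$; that coefficient is $-e^{2\pi i j a_2}/j$, of Archimedean absolute value $1/j\le1$. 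So at most $k/N+1$ values of~$n$ contribute (each of magnitude $\le 1$), and summing over the two signs, the $12N$ multiplicity, and $|\OO|<N^2$ gives $|\lambda_k^c|_v\le 24N^2(k+N)$ directly. Your claim $|c_m|_v\le 12N$ is also off: the $c_m=e^{\pm 2\pi i a_2}$ are roots of unity of absolute value~$1$, and the $12N$ enters only as a multiplicity (equivalently, a factor $12N$ multiplying each $\log$), not as a bound on the factor's coefficient. Finally, the logarithmic-derivative detour is unnecessary: the $1/j$ factors are $\le 1$ at Archimedean places, so they only help, while at finite places they are exactly what produces the $|k|_v^{-1}$ (since the only admissible $j$ divide $k$, so $|j|_v\ge|k|_v$), which you did notice. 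The paper handles the finite case with the ultrametric inequality, since each contributing term already satisfies $|\cdot|_v\le|k|_v^{-1}$ and $|12N|_v\le1$.
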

\begin{proof}
By definition, we have
\begin{equation}\label{U(q_c)}
\frac{U(q_c)}{\gamma_{\OO,c}q_c^{\frac{\ord_{c}U}{e_{c}}}}=
\prod\limits_{{\rm\bf a}\in \OO}\prod\limits_{\substack{n=0\\n+a_{1}\ne 0}}^{\infty}(1-q_{c}^{n+a_{1}}e^{2\pi ia_{2}})^{12N}
\prod\limits_{n=0}^{\infty}(1-q_{c}^{n+1-a_{1}}e^{-2\pi ia_{2}})^{12N}.
\end{equation}
Since
$$
\sum\limits_{{\rm\bf a}\in \OO}\left(\sum\limits_{\substack{n=0\\n+a_{1}\ne 0}}^{\infty}12N|q_{c}|_{v}^{n+a_{1}}+
\sum\limits_{n=0}^{\infty}12N|q_{c}|_{v}^{n+1-a_{1}}\right)
$$
is convergent, it follows from \cite[Chapter 5 Section 2.2 Theorem 6]{Ahlfors} that the right-hand side of (\ref{U(q_c)}) is absolutely convergent ($v$ is infinite). It is also true when $v$ is finite.
Then we can write (\ref{U(q_c)}) as the form $\prod\limits_{n=1}^{\infty}(1+d_{n})$ such that $\prod\limits_{n=1}^{\infty}(1+d_{n})$ is absolutely convergent.
Hence, \cite[Chapter 5 Section 2.2 Theorem 5]{Ahlfors} ($v$ is infinite) and \cite[Chapter IV Section 2]{Koblitz} ($v$ is finite) give
\begin{align*}
&\log \frac{U(q_c)}{\gamma_{\OO,c}q_c^{\frac{\ord_{c}U}{e_{c}}}}\\
&=2\pi f(q_{c})i+
\sum\limits_{{\rm\bf a}\in \OO}\left(\sum\limits_{\substack{n=0\\n+a_{1}\ne 0}}^{\infty}12N\log(1-q_{c}^{n+a_{1}}e^{2\pi ia_{2}})+
\sum\limits_{n=0}^{\infty}12N\log(1-q_{c}^{n+1-a_{1}}e^{-2\pi ia_{2}})\right),
\end{align*}
where by default $f(q_{c})$ is always equal to 0 if $v$ is finite.
Applying the Taylor expansion of the logarithm function to the right-hand side of the above formula, we get the desired formula for
$\log\frac{U(q_c)}{\gamma_{\OO,c}q_{c}^{\frac{\ord_{c}U}{e_{c}}}}$.

For a fixed non-negative integer $n$ (where we assume $n>0$ if $a_{1}=0$), write
\begin{equation}\label{Taylor}
\log(1-q_{c}^{n+a_{1}}e^{2\pi ia_{2}})=\sum\limits_{k=1}^{\infty}\alpha_{k}q^{k/N}.
\notag
\end{equation}
An immediate verification shows that
\begin{equation}
|\alpha_{k}|_{v}\le\left\{ \begin{array}{ll}
                 |k|_{v}^{-1} & \textrm{if $v$ is finite},\\
                 1 & \textrm{if $v$ is infinite}.
                 \end{array} \right.
\notag
\end{equation}
Same estimates hold true for the coefficients of the $q$-series for $\log(1-q_{c}^{n+1-a_{1}}e^{-2\pi ia_{2}})$.

For each ${\rm\bf a}\in \OO$, the number of coefficients in the $q$-series for $\log(1-q_{c}^{n+a_{1}}e^{2\pi ia_{2}})$ which may contribute to $\lambda_{k}^{c}$ (those with $0\le n\le k/N$) is at most $k/N+1$, and the same is true for the $q$-series for $\log(1-q_{c}^{n+1-a_{1}}e^{-2\pi ia_{2}})$. The bound for $|\lambda_{k}^{c}|_{v}$ now follows by summation.
\end{proof}

\begin{corollary}\label{Taylor2}
Assume that $\ord_{c}U=0$. Then $\lambda_{k}^{c}\ne 0$ for some $k\le N^{6}$.
\end{corollary}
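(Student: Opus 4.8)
The plan is to argue by contradiction: if $\lambda_k^c = 0$ for all $k \le N^6$, then the Taylor expansion of $\log\bigl(U(q_c)/(\gamma_{\OO,c} q_c^{\ord_c U/e_c})\bigr)$ in Lemma \ref{Taylor1} vanishes to very high order, which forces $U$ itself to be constant, contradicting Proposition \ref{munit}(vi). Since $\ord_c U = 0$, formula \eqref{Taylor0} reads $\log\bigl(U(q_c)/\gamma_{\OO,c}\bigr) = 2\pi f(q_c)i + \sum_{k\ge 1}\lambda_k^c q_c^{k/N}$. If all $\lambda_k^c$ with $k \le N^6$ were zero, then $U(q_c)/\gamma_{\OO,c}$, viewed as a holomorphic function near the cusp $c$ (in the local parameter $t_c$, so $q_c = t_c^{e_c}$), would agree with the constant $1$ up to order $t_c^{e_c N^6/N} = t_c^{e_c N^5}$, hence $U$ would have a zero of order $> e_c N^5 / e_c \cdot \text{(something)}$ — more precisely the rational function $U/\gamma_{\OO,c} - 1$ on $X_{G_1}$ would vanish at $c$ to order at least (a large multiple of) $N^5$ in the appropriate local coordinate.

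The key quantitative input is the degree bound on $U$ as a rational function on $X_{G_1}$. First I would recall that $U = u_{\OO}$ is a modular unit, so its divisor is supported on the cusps, and by Proposition \ref{munit}(ii) each $|\ord_{c'}(u_{\OO})| < N^4$. Summing over the cusps, of which there are $\nu_\infty(G) < N^2$, the degree of $U$ as a map $X_{G_1} \to \mathbb{P}^1$ is at most $N^2 \cdot N^4 = N^6$ (perhaps with a slightly smaller exponent; the point is it is well under $N^6$ once one is careful, or one absorbs constants). Therefore the function $U/\gamma_{\OO,c} - 1$, if not identically zero, can vanish at the single point $c$ to order at most $\deg(U)$, i.e. at most on the order of $N^6$ in terms of $q_c$. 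So if the first nonzero $\lambda_k^c$ occurred only for some $k > N^6$, the vanishing order of $U/\gamma_{\OO,c} - 1$ at $c$ would exceed the degree of $U$, which is impossible unless $U/\gamma_{\OO,c} - 1 \equiv 0$, i.e. $U$ is constant — contradicting Proposition \ref{munit}(vi), which says $W$ (hence in this branch $U$) is non-constant.

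The main obstacle is bookkeeping the exact relation between the order of vanishing of the $q_c$-series $\sum \lambda_k^c q_c^{k/N}$ and the order of vanishing of the rational function $U/\gamma_{\OO,c} - 1$ at the point $c$ of $X_{G_1}$: one must pass correctly between the variable $q_c^{1/N}$, the local parameter $t_c$ with $q_c = t_c^{e_c}$, and the divisor-theoretic vanishing order, keeping track of $e_c \mid N$. Once that translation is pinned down, comparing with the degree bound $\deg U < N^6$ coming from Proposition \ref{munit}(ii) and $\nu_\infty(G) < N^2$ closes the argument. I would also note that the logarithm's branch term $2\pi f(q_c) i$ is irrelevant here: it does not affect which $\lambda_k^c$ vanish, and it does not appear in the $v$-adic (finite $v$) version at all.
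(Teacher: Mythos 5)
Your argument is essentially the paper's: both bound $\ord_c(U/\gamma_{\OO,c}-1)$ by $\deg U = \tfrac12\sum_{c_0}|\ord_{c_0}U| < \tfrac12 N^2\cdot N^4$ using Proposition \ref{munit}(ii) together with $\nu_\infty(G)<N^2$, and then read off the smallest index $k$ with $\lambda_k^c\neq 0$. The paper states this directly (via the extension of $\ord_c$ to $K((q_c^{1/e_c}))$) rather than by contradiction, and you dropped the factor $\tfrac12$ in the degree formula, but the strict inequalities absorb it and the mathematical content is identical.
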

\begin{proof}
Since $U$ is not a constant, there must exist some $\lambda_{k}^{c}\ne 0$. Under the assumption $\ord_{c}U=0$,
we have $U(c)=\gamma_{\OO,c}$, and then $f(q_c(c))=0$ by (\ref{Taylor0}). We extend the additive valuation $\ord_{c}$ from the field $K(X_{G_1})$ to the field of formal
power series $K((q_{c}^{1/e_c}))$. Then $\ord_{c}q_{c}^{1/e_c}=1$ and
$\ord_{c}\left(-2\pi f(q_{c})i+\log (U/\gamma_{\OO,c})\right)\le\ord_{c}\log (U/\gamma_{\OO,c})=\ord_{c}(U/\gamma_{\OO,c}-1)$. The latter quantity is bounded by the degree of
$U/\gamma_{\OO,c}-1$, which is equal to the degree of $U$.

The degree of $U$ is equal to $\frac{1}{2}\sum\limits_{c_{0}}\left|\ord_{c_{0}}\, U\right|$,
here the sum runs through all the cusps of $X_{G_{1}}$. Then the result follows from Proposition \ref{munit} (ii).
\end{proof}

Now we can prove a general result.
\begin{proposition}
Assume that $\ord_{c}U=0$. Then for $P\in \Omega_{c,v}$ such that $U(P)=\gamma_{\OO,c}$, we have
$$
\log|q_c(P)^{-1}|_{v}\le N\varphi(N)\log(24N^{14}+24N^{9})+N\log(48N^{2}(N^{6}+N+1)).
$$
\end{proposition}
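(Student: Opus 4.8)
The plan is to pin down the $v$-adic size of $q_c(P)$ by playing the leading term of the Taylor expansion of Lemma \ref{Taylor1} against its tail. Write $t=q_c(P)$ and $\rho=|t|_v$; since $P\in\Omega_{c,v}$ we have $\rho<1$, and we may assume that $\rho$ is small enough for the estimates below (any fixed negative power of $N$ will do), because otherwise $\log|q_c(P)^{-1}|_v=N\log\rho^{-1/N}$ is already $O(N\log N)$, well below the quantity in the statement. Put $L=\sum_{k\ge1}\lambda_k^c\,t^{k/N}$. Since $\ord_cU=0$, exponentiating the identity (\ref{Taylor0}) removes the factor $e^{2\pi f(t)i}=1$ and yields the analytic identity $U(t)/\gamma_{\OO,c}=e^{L}$; hence the hypothesis $U(P)=\gamma_{\OO,c}$ gives $e^{L}=1$. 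Using the bounds $|\lambda_k^c|_v\le 24N^2(k+N)$ (Archimedean $v$) and $|\lambda_k^c|_v\le|k|_v^{-1}$ (non-Archimedean $v$) from Lemma \ref{Taylor1}, the smallness of $\rho$ forces $|L|_v$ to be less than $2\pi$ in the Archimedean case (resp. inside the convergence disc of the $p$-adic exponential in the non-Archimedean case), and therefore $L=0$.

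Next I isolate the leading term. Let $k_0$ be the least index with $\lambda_{k_0}^c\ne0$, so $\lambda_k^c=0$ for $1\le k<k_0$, and $k_0\le N^6$ by Corollary \ref{Taylor2}. From $L=0$,
\[
\lambda_{k_0}^c\,t^{k_0/N}=-\sum_{k>k_0}\lambda_k^c\,t^{k/N}.
\]
Applying $|\cdot|_v$ and bounding the right-hand side by means of Lemma \ref{Taylor1} — summing the resulting geometric-type series when $v$ is Archimedean, and using the ultrametric inequality together with the monotonicity of $k\mapsto k\rho^{k/N}$ (valid for $\rho$ small) when $v$ is non-Archimedean — we obtain
\[
|\lambda_{k_0}^c|_v\,\rho^{k_0/N}\le 48N^2(k_0+N+1)\,\rho^{(k_0+1)/N},\qquad\text{so}\qquad \rho^{-1/N}\le\frac{48N^2(k_0+N+1)}{|\lambda_{k_0}^c|_v}.
\]

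It remains to bound $|\lambda_{k_0}^c|_v$ from below. Since $\lambda_{k_0}^c$ is a nonzero element of $\Q(\zeta_{N})$, the product formula (a Liouville-type estimate) gives $|\lambda_{k_0}^c|_v\ge e^{-\varphi(N)\h(\lambda_{k_0}^c)}$, and the height bound of Lemma \ref{Taylor1}, together with $k_0\le N^6$, gives $\h(\lambda_{k_0}^c)\le\log\!\big((24N^3+24k_0N^2)k_0\big)\le\log(24N^{14}+24N^9)$. Substituting into the previous display and using $k_0\le N^6$ once more,
\[
\log|q_c(P)^{-1}|_v=N\log\rho^{-1/N}\le N\log\!\big(48N^2(N^6+N+1)\big)+N\varphi(N)\log(24N^{14}+24N^9),
\]
which is the claimed bound. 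The only delicate point is the implication $e^{L}=1\Rightarrow L=0$: it is what forces the preliminary reduction to $\rho$ small and requires handling uniformly the branch ambiguity of the complex logarithm and the restricted domain of the $p$-adic exponential. Everything else is routine bookkeeping with the explicit estimates already recorded in Lemma \ref{Taylor1} and Corollary \ref{Taylor2}.
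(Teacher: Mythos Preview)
Your proof is correct and follows essentially the same route as the paper: expand via Lemma~\ref{Taylor1}, isolate the first nonzero coefficient (whose index is $\le N^6$ by Corollary~\ref{Taylor2}), bound the tail by the explicit coefficient estimates, and lower-bound $|\lambda_{k_0}^c|_v$ by Liouville. The only cosmetic difference is in how the term $2\pi f(q_c)i$ is disposed of in the Archimedean case: the paper splits into the cases $f(q_c(P))=0$ and $f(q_c(P))\ne0$ (the latter yielding the second summand in the displayed bound directly), whereas you exponentiate and then argue $|L|_v<2\pi$ to force $L=0$; these are equivalent.

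Two small remarks. First, your parenthetical ``any fixed negative power of $N$ will do'' is slightly misleading if read as a threshold on $\rho$ itself: to get $|L|_v<2\pi$ one needs $\rho^{1/N}$ below a fixed negative power of $N$ (equivalently $\rho<N^{-CN}$), which is exactly what your ``otherwise $O(N\log N)$'' clause actually delivers. Second, in the non-Archimedean case the detour through the $p$-adic exponential is unnecessary: by the proof of Lemma~\ref{Taylor1} one has $f\equiv0$ there, so the identity~\eqref{Taylor0} gives $L=\log 1=0$ immediately.
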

\begin{proof}
Let $n$ be the smallest $k$ such that $\lambda_{k}^{c}\ne 0$. Then $n\le N^{6}$.
We assume that $|q_c(P)|_{v}\le 10^{-N}$, otherwise there is nothing to prove. Since $\ord_{c}U=0$
and $U(P)=\gamma_{\OO,c}$, it follows from Lemma \ref{Taylor1} that $2\pi f(q_{c}(P))i+\sum\limits_{k=n}^{\infty}\lambda_{k}^{c}q_{c}(P)^{k/N}=0$.

Suppose that $f(q_{c}(P))=0$. Then $|\lambda_{n}^{c}q_{c}(P)^{n/N}|_{v}=|\sum\limits_{k=n+1}^{\infty}\lambda_{k}^{c}q_{c}(P)^{k/N}|_{v}$.
 On one side, we have
\begin{align*}
|\sum\limits_{k=n+1}^{\infty}\lambda_{k}^{c}q_{c}(P)^{k/N}|_{v}&\le \sum\limits_{k=n+1}^{\infty}|\lambda_{k}^{c}|_{v}|q_{c}(P)|_{v}^{k/N}\\
&\le \sum\limits_{k=n+1}^{\infty}24N^{2}(k+N)|q_{c}(P)|_{v}^{k/N}\\
&=48N^{2}(n+N+1)|q_{c}(P)|_{v}^{(n+1)/N}.
\end{align*}
On the other side, using Liouville's inequality (see \cite[Formula (3.13)]{Waldschmidt2000}), we get
$$
|\lambda_{n}^{c}|_{v}\ge e^{-[\Q(\zeta_{N}):\Q]\h({\lambda_{n}^{c}})}\ge (24nN^{3}+24n^{2}N^{2})^{-\varphi(N)}.
$$
Then the desired result follows easily.

Suppose that $f(q_{c}(P))\ne 0$. Then $2\pi\le|\sum\limits_{k=n}^{\infty}\lambda_{k}^{c}q_{c}(P)^{k/N}|_{v}\le 48N^{2}(n+N)|q_{c}(P)|_{v}^{n/N}$.
Then we get
$$
\log|q_c(P)^{-1}|_{v}\le N\log(48N^{2}(N^{6}+N)).
$$
\end{proof}

Now we assume that $\ord_{c_w}U=0$. Then we have $W=U$. Since $\Lambda =1$, $W(P)=\gamma_{\OO,c_w}$.
For the $S$-integral point $P$ of $X_{G_1}$ fixed in Section \ref{mixed level}, applying the above proposition to $W$, we obtain
\begin{align*}
\h(P)&\le s(\log|q_w(P)^{-1}|_{w}+\log 2)\\
&\le s_{0}N\left(N\varphi(N)\log(24N^{14}+24N^{9})+N\log(48N^{2}(N^{6}+N+1))+\log 2\right).
\end{align*}

Now we assume that $\ord_{c_w}U\ne 0$. Then $W=U^{\ord_{c_w}V}V^{-\ord_{c_w}U}$ with $\ord_{c_w}W=0$.
Proposition \ref{munit} (vi) guarantees that $W$ is not a constant.
Applying the same method as the above without difficulties, we can also get a better bound than Theorem \ref{main1} and \ref{main2}.
We omit the details here.

In conclusion, if assuming $\Lambda=1$, we can get polynomial bounds for $\h(P)$ in terms of $s_{0}$ and $N$, which are obviously
better than those in Theorems \ref{main1}-\ref{main3}.

\section*{Acknowledgement}
The author would like to thank his advisor Yuri Bilu for lots of stimulating suggestions, helpful discussions and careful reading, especially for his key suggestion in Section \ref{specialcase}. He also thank
Aur\'elien Bajolet for valuable discussions.
He is also grateful to the referee for careful reading and very useful comments.

\end{document}